\title{Hyperfinite graphings and combinatorial optimization}
\author{L\'aszl\'o Lov\'asz\\
Hungarian Academy of Sciences and E\"otv\"os Lor\'and University\\
Budapest}
\date{
\it Dedicated to Endre Szemer\'edi at the occasion of his $80^{\rm th}$
birthday}
\long\def\ignore#1{}
\begin{document}

\newtheorem{theorem}{Theorem}
\newtheorem{prop}[theorem]{Proposition}
\newtheorem{lemma}[theorem]{Lemma}
\newtheorem{claim}{Claim}
\newtheorem{corollary}[theorem]{Corollary}
\theorembodyfont{\rmfamily}
\newtheorem{remark}[theorem]{Remark}
\newtheorem{example}{Example}
\newtheorem{conj}{Conjecture}
\newtheorem{problem}[theorem]{Problem}
\newtheorem{step}{Step}
\newtheorem{alg}{Algorithm}
\newenvironment{proof}{\medskip\noindent{\bf Proof. }}{\hfill$\square$\medskip}

\def\R{\mathbb{R}}
\def\one{\mathbbm1}
\def\T{^{\sf T}}
\def\Pr{{\sf P}}
\def\E{{\sf E}}
\def\Q{{\mathbf Q}}
\def\bd{\text{bd}}
\def\eps{\varepsilon}
\def\wh{\widehat}
\def\cork{\text{\rm corank}}
\def\rank{\text{\rm rank}}
\def\Ker{\text{\rm Ker}}
\def\rk{\text{\rm rank}}
\def\supp{\text{\rm supp}}
\def\diag{\text{\rm diag}}
\def\sep{\text{\rm sep}}
\def\tr{\text{\rm tr}}
\def\iso{{h}}

\def\AA{\mathcal{A}}\def\BB{\mathcal{B}}\def\CC{\mathcal{C}}
\def\DD{\mathcal{D}}\def\EE{\mathcal{E}}\def\FF{\mathcal{F}}
\def\GG{\mathcal{G}}\def\HH{\mathcal{H}}\def\II{\mathcal{I}}
\def\JJ{\mathcal{J}}\def\KK{\mathcal{K}}\def\LL{\mathcal{L}}
\def\MM{\mathcal{M}}\def\NN{\mathcal{N}}\def\OO{\mathcal{O}}
\def\PP{\mathcal{P}}\def\QQ{\mathcal{Q}}\def\RR{\mathcal{R}}
\def\SS{\mathcal{S}}\def\TT{\mathcal{T}}\def\UU{\mathcal{U}}
\def\VV{\mathcal{V}}\def\WW{\mathcal{W}}\def\XX{\mathcal{X}}
\def\YY{\mathcal{Y}}\def\ZZ{\mathcal{Z}}

\def\Ab{\mathbf{A}}\def\Bb{\mathbf{B}}\def\Cb{\mathbf{C}}
\def\Db{\mathbf{D}}\def\Eb{\mathbf{E}}\def\Fb{\mathbf{F}}
\def\Gb{\mathbf{G}}\def\Hb{\mathbf{H}}\def\Ib{\mathbf{I}}
\def\Jb{\mathbf{J}}\def\Kb{\mathbf{K}}\def\Lb{\mathbf{L}}
\def\Mb{\mathbf{M}}\def\Nb{\mathbf{N}}\def\Ob{\mathbf{O}}
\def\Pb{\mathbf{P}}\def\Qb{\mathbf{Q}}\def\Rb{\mathbf{R}}
\def\Sb{\mathbf{S}}\def\Tb{\mathbf{T}}\def\Ub{\mathbf{U}}
\def\Vb{\mathbf{V}}\def\Wb{\mathbf{W}}\def\Xb{\mathbf{X}}
\def\Yb{\mathbf{Y}}\def\Zb{\mathbf{Z}}

\def\ab{\mathbf{a}}\def\bb{\mathbf{b}}\def\cb{\mathbf{c}}
\def\db{\mathbf{d}}\def\eb{\mathbf{e}}\def\fb{\mathbf{f}}
\def\gb{\mathbf{g}}\def\hb{\mathbf{h}}\def\ib{\mathbf{i}}
\def\jb{\mathbf{j}}\def\kb{\mathbf{k}}\def\lb{\mathbf{l}}
\def\mb{\mathbf{m}}\def\nb{\mathbf{n}}\def\ob{\mathbf{o}}
\def\pb{\mathbf{p}}\def\qb{\mathbf{q}}\def\rb{\mathbf{r}}
\def\sb{\mathbf{s}}\def\tb{\mathbf{t}}\def\ub{\mathbf{u}}
\def\vb{\mathbf{v}}\def\wb{\mathbf{w}}\def\xb{\mathbf{x}}
\def\yb{\mathbf{y}}\def\zb{\mathbf{z}}

\def\Abb{\mathbb{A}}\def\Bbb{\mathbb{B}}\def\Cbb{\mathbb{C}}
\def\Dbb{\mathbb{D}}\def\Ebb{\mathbb{E}}\def\Fbb{\mathbb{F}}
\def\Gbb{\mathbb{G}}\def\Hbb{\mathbb{H}}\def\Ibb{\mathbb{I}}
\def\Jbb{\mathbb{J}}\def\Kbb{\mathbb{K}}\def\Lbb{\mathbb{L}}
\def\Mbb{\mathbb{M}}\def\Nbb{\mathbb{N}}\def\Obb{\mathbb{O}}
\def\Pbb{\mathbb{P}}\def\Qbb{\mathbb{Q}}\def\Rbb{\mathbb{R}}
\def\Sbb{\mathbb{S}}\def\Tbb{\mathbb{T}}\def\Ubb{\mathbb{U}}
\def\Vbb{\mathbb{V}}\def\Wbb{\mathbb{W}}\def\Xbb{\mathbb{X}}
\def\Ybb{\mathbb{Y}}\def\Zbb{\mathbb{Z}}

\def\Af{\mathfrak{A}}\def\Bf{\mathfrak{B}}\def\Cf{\mathfrak{C}}
\def\Df{\mathfrak{D}}\def\Ef{\mathfrak{E}}\def\Ff{\mathfrak{F}}
\def\Gf{\mathfrak{G}}\def\Hf{\mathfrak{H}}\def\If{\mathfrak{I}}
\def\Jf{\mathfrak{J}}\def\Kf{\mathfrak{K}}\def\Lf{\mathfrak{L}}
\def\Mf{\mathfrak{M}}\def\Nf{\mathfrak{N}}\def\Of{\mathfrak{O}}
\def\Pf{\mathfrak{P}}\def\Qf{\mathfrak{Q}}\def\Rf{\mathfrak{R}}
\def\Sf{\mathfrak{S}}\def\Tf{\mathfrak{T}}\def\Uf{\mathfrak{U}}
\def\Vf{\mathfrak{V}}\def\Wf{\mathfrak{W}}\def\Xf{\mathfrak{X}}
\def\Yf{\mathfrak{Y}}\def\Zf{\mathfrak{Z}}

\maketitle

\tableofcontents

\begin{abstract}
We exhibit an analogy between the problem of pushing forward measurable sets
under measure preserving maps and linear relaxations in combinatorial
optimization. We show how invariance of hyperfiniteness of graphings under
local isomorphism can be reformulated as an infinite version of a natural
combinatorial optimization problem, and how one can prove it by extending
well-known proof techniques (linear relaxation, greedy algorithm, linear
programming duality) from the finite case to the infinite.
\end{abstract}

\section{Introduction}\label{SEC:PRELIM}

\subsection{Pushing forward and pulling back}\label{SEC:PULL-PUSH}

Let $X$ and $Y$ be standard probability spaces, and let $\phi:~X\to
Y$ be a measure preserving map. By definition, we can {\it pull back}
a measurable subset $Y'\subseteq Y$: the set $\phi^{-1}(Y')$ is
measurable in $X$ and has the same measure as $Y'$. It is a lot more
troublesome to {\it push forward} a measurable subset $X'\subseteq
X$: the image $\phi(X')$ may not be measurable, and its measure may
certainly differ from the measure of $X'$.

On the other hand, if we have a measure $\mu$ on $X$ (possibly different from
the probability measure of $X$), then we can push it forward by the formula
$\mu^\phi(Y')=\mu(\phi^{-1}(Y'))$. So if we really have to push forward a
subset of $X$, we would like to ``encode'' it in a suitable sense by a measure,
and push forward this measure to $Y$. Then, of course, we still face the task
to ``distill'' a subset of $Y$ from this measure on $Y$. Measures can be pulled
back from $Y$ to $X$, at least if they have a density function $f$, using
$f^\phi(x)=f(\phi(x))$.

This vague description may sound familiar to a basic technique in combinatorial
optimization: linear relaxation. (It is also similar to ``fuzzy sets'', which
is a related setup.) The goal of this paper is to show that this analogy is in
fact much more relevant than it seems. We show how some important results in
graph limit theory (like invariance of hyperfiniteness under local
isomorphism), can be reformulated as infinite versions of natural combinatorial
optimization problems, and how one can prove them by extending well-known proof
techniques for the ``distillation'' of a set from a measure from the finite
case to the infinite. (Since the proof for the infinite case is described in
\cite{Hombook}, we only sketch it here.)

\subsection{Graphings}

For the rest of this paper, we fix a positive integer $D$, and all
graphs we consider are supposed to have maximum degree at most $D$.

A {\it graphing} is a Borel graph with bounded (finite) degree on a standard
probability space $(I,\AA,\lambda)$, satisfying the following
``measure-preserving'' condition for any two Borel subsets $A,B\subseteq I $:
\begin{equation}\label{EQ:UNIMOD}
\int\limits_A \deg_B(x)\,d\lambda(x) =\int\limits_B
\deg_A(x)\,d\lambda(x).
\end{equation}
Here $\deg_B(x)$ denotes the number of edges connecting $x\in I$ to
points of $B$. (It can be shown that this is a Borel function of
$x$.) Most of the time, we may assume that $I=[0,1]$ and $\lambda$ is
the Lebesgue measure.

Such a graphing defines a measure on Borel subsets of $I^2$: on
rectangles we define
\[
\eta(A\times B) = \int\limits_A \deg_B(x)\,d\lambda(x),
\]
which extends to Borel subsets in the standard way. We call this the
{\it edge measure} of the graphing. It is concentrated on the set of
edges, and it is symmetric in the sense that interchanging the two
coordinates does not change it.

For a graphing $\Gb$ and positive integer $r$, let us pick a random
element $x\in I$ according to $\lambda$, and consider the subgraph
$B_\Gb(x,r)$ induced by nodes at distance at most $r$ from $x$. This
gives us a probability distribution on {\it $r$-balls}: rooted graphs
with degrees at most $D$ and radius (maximum distance from the root)
at most $r$. Let $\rho_{\Gb,r}$ be the distribution of $B_\Gb(v,r)$.
We sometimes suppress the subscript $\Gb$ when the underlying
graphing is understood.

Two graphings $\Gb_1$ and $\Gb_2$ are called {\it locally
equivalent}, if $\rho_{\Gb_1,r}=\rho_{\Gb_2,r}$ for every $r\ge0$. To
characterize local equivalence, let us define a map $\phi:~V(\Gb_1)
\to V(\Gb_2)$ to be a {\it local isomorphism} from $\Gb_1$ to
$\Gb_2$, if it is measure preserving, and for every $x\in V(\Gb_1)$,
$\phi$ is an isomorphism between the connected component of $\Gb_1$
containing $x$ and the connected component of $\Gb_2$ containing
$\phi(x)$. It is easy to see that if there exists a local isomorphism
$\Gb_1\to\Gb_2$, then $\Gb_1$ and $\Gb_2$ are locally equivalent.

A local isomorphism may not be bijective, or even injective: it may
map different components of $\Gb_1$ on the same component of $\Gb_2$.
So it is not sufficient to characterize local equivalence. But making
it symmetric, we get a characterization \cite{Hombook}:

\begin{prop}\label{PROP:LOC-EQ}
Two graphings $\Gb_1$ and $\Gb_2$ are locally equivalent if and only
if there exists a third graphing $\Gb$ having local isomorphisms
$\Gb\to \Gb_1$ and $\Gb\to \Gb_2$.
\end{prop}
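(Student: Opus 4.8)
The plan is to dispatch the ``if'' direction by a one-line local-statistics computation, and to handle the ``only if'' direction by a ``fibered product'' construction, whose only real difficulty is equipping the product with the right measure.

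\emph{Sufficiency.} Suppose $h_i\colon\Gb\to\Gb_i$ ($i=1,2$) are local isomorphisms. Picking $x\in V(\Gb)$ at random, $h_i(x)$ is distributed according to $\lambda_i$ (as $h_i$ is measure preserving), and $B_\Gb(x,r)\cong B_{\Gb_i}(h_i(x),r)$ for every $r$ (as $h_i$ is an isomorphism on connected components); hence $\rho_{\Gb,r}=\rho_{\Gb_i,r}$ for all $r$, and therefore $\rho_{\Gb_1,r}=\rho_{\Gb_2,r}$, i.e.\ $\Gb_1$ and $\Gb_2$ are locally equivalent.

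\emph{The candidate refinement.} Now let $\Gb_1$ live on $(I_1,\AA_1,\lambda_1)$ and $\Gb_2$ on $(I_2,\AA_2,\lambda_2)$, locally equivalent. Let $J$ be the set of triples $(x,y,\alpha)$ with $x\in I_1$, $y\in I_2$, and $\alpha$ a rooted graph isomorphism from the connected component of $x$ in $\Gb_1$ onto the connected component of $y$ in $\Gb_2$ with $\alpha(x)=y$; encoding components as rooted graphs with vertex set in $\mathbb N$ (say, in breadth-first order) makes $J$ a standard Borel space. Put a graph on $J$ by joining $(x,y,\alpha)$ and $(x',y',\alpha)$ (the same $\alpha$) whenever $x\sim x'$ in $\Gb_1$, equivalently $y=\alpha(x)\sim\alpha(x')=y'$ in $\Gb_2$. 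Then every connected component of $J$ has the form $\{(x',\alpha(x'),\alpha):x'\in V(C)\}$ for a component $C$ of $\Gb_1$ and a fixed $\alpha$, so the projections $\pi_1(x,y,\alpha)=x$ and $\pi_2(x,y,\alpha)=y$ are graph isomorphisms on every component of $J$. Hence, once $J$ carries a probability measure $\mu$ with $(\pi_i)_*\mu=\lambda_i$ that turns the graph on $J$ into a graphing, $\pi_1$ and $\pi_2$ are the local isomorphisms we want.

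\emph{The measure, and the main obstacle.} To build $\mu$, let $\rho$ be the common local limit of $\Gb_1$ and $\Gb_2$ (the consistent limit of the $\rho_{\Gb_i,r}$, a measure on rooted isomorphism types $[G,o]$), and disintegrate $\lambda_i=\int\lambda_i^{[G,o]}\,d\rho([G,o])$ along the map sending $x$ to the rooted type of its component. For $x,y$ with isomorphic rooted components, the rooted isomorphisms between them form a torsor over the compact group $\text{Aut}[G,o]$; let $\nu_{x,y}$ be the image of its Haar probability measure. Set $\mu=\int\mu_{[G,o]}\,d\rho([G,o])$, where $\mu_{[G,o]}$ is the law of $(x,y,\alpha)$ obtained by drawing $x\sim\lambda_1^{[G,o]}$ and $y\sim\lambda_2^{[G,o]}$ independently and $\alpha\sim\nu_{x,y}$ (discarding the $\rho$-null set of types unmatched on one side); then $(\pi_i)_*\mu=\lambda_i$ by construction. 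The heart of the matter is the remaining verification that $(J,\mu)$ satisfies the measure-preserving identity \eqref{EQ:UNIMOD}; I would deduce it from the mass transport principle, using that $\pi_1,\pi_2$ are measure preserving, that a component of $J$ carries the same graph as each of its images, and that \eqref{EQ:UNIMOD} holds for $\Gb_1$ and $\Gb_2$ --- the subtlety being that the fiber measures $\lambda_i^{[G,o]}$ are themselves not graphings, so the identity genuinely uses the averaging over $\rho$. In the spirit of this paper one can instead read the existence of $\mu$ as the feasibility of an infinite transportation problem --- match, componentwise, the $\lambda_1$-mass of each type to the $\lambda_2$-mass of the same type --- and prove feasibility by a weak-$*$ compactness argument after compactifying $\Gb_1$ and $\Gb_2$, infeasibility being ruled out by a separating functional that would otherwise witness $\rho_{\Gb_1,r}\neq\rho_{\Gb_2,r}$ for some $r$. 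A pervasive but routine chore is checking Borel measurability of all the auxiliary objects (the space of isomorphisms, the Haar and disintegration systems).
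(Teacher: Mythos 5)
Your ``if'' direction is fine, and your fibered-product skeleton for the ``only if'' direction (triples $(x,y,\alpha)$, edges inherited along a fixed component isomorphism $\alpha$, projections as the two local isomorphisms) is a sensible way to go; note that the paper itself states Proposition~\ref{PROP:LOC-EQ} with a citation to \cite{Hombook} and contains no proof, so I can only judge your argument on its own terms. Measured that way, there is a genuine gap, and it sits exactly where you say the ``heart of the matter'' is: you never verify that the measure $\mu$ you build turns the graph on $J$ into a graphing, i.e.\ that \eqref{EQ:UNIMOD} holds. Having the correct marginals $(\pi_i)_*\mu=\lambda_i$ is far from enough, and the claim that \eqref{EQ:UNIMOD} ``would follow from the mass transport principle'' is not a routine deduction. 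Concretely, writing $\mu(dx\,dy\,d\alpha)=\lambda_1(dx)\,\kappa_x(d\alpha)$ with $\kappa_x(d\alpha)=\int\lambda_2^{[C_x,x]}(dy)\,\nu_{x,y}(d\alpha)$, one application of \eqref{EQ:UNIMOD} for $\Gb_1$ reduces \eqref{EQ:UNIMOD} for $(J,\mu)$ to a re-rooting invariance of the kernel: the measure $\lambda_1(dx)\otimes(\text{counting over }x'\sim x)\otimes\kappa_x(d\alpha)$ must coincide with the same expression with $\kappa_{x'}$ in place of $\kappa_x$; in other words, the law of the random isomorphism must (a.e.) depend only on the component of $\Gb_1$, not on the chosen root. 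This is a nontrivial statement about $\Gb_2$: the rooted automorphism orbits at the two ends of an edge have different sizes in general (already for components that are paths on three vertices, rooted at the center versus at a leaf), and the discrepancy is compensated only because the Haar averaging over the rooted-automorphism torsor interacts in exactly the right way with the relative masses of the conditional measures $\lambda_2^{[G,o]}$ --- which is an instance of \eqref{EQ:UNIMOD} for $\Gb_2$, but one that, when $\rho$ is non-atomic, cannot be obtained by conditioning on a single type and requires a genuine disintegration/differentiation argument. That the step is not automatic is shown by the observation that replacing the Haar measure $\nu_{x,y}$ by a measurable selection of a single isomorphism would in general destroy \eqref{EQ:UNIMOD} while leaving the marginals intact.

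So the missing content is precisely this equivariance verification (together with the honestly nontrivial Borel construction of the kernels $(x,y)\mapsto\nu_{x,y}$ and of the disintegration, which you defer as a ``routine chore''). I believe your construction can be pushed through --- the Haar averaging is the right idea, and in test cases the orbit-size factors do cancel against the $\rho$-masses exactly as \eqref{EQ:UNIMOD} for $\Gb_2$ predicts --- but as written the proof of the hard direction asserts its central step rather than proving it. Your fallback sketch (an infinite transportation problem solved by weak-$*$ compactness after compactification) does not repair this: feasibility of a coupling with the right vertex marginals is again only the marginal condition, and the same edge-equivariance issue reappears when one asks the coupled object to satisfy \eqref{EQ:UNIMOD}.
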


\subsection{Compact graphings}

Let $\Gb=(I,E,\lambda)$ be a graphing on a metric space $(I,d)$. For two points
$x,y\in I$ and integer $r\ge 0$, we define an {\it $r$-neighborhood
isomorphism} between $x$ and $y$ as an isomorphism $\phi:~B(x,r)\to B(y,r)$
such that $\phi(x)=y$. Such an isomorphism always exists for $r=0$. For an
$r$-neighborhood isomorphism $\phi$ between $x$ and $y$, we define
\begin{equation}\label{EQ:DIST}
d(\phi) = \max_{z\in B(x,r)} d(z,\phi(z)).
\end{equation}

We say that a graphing $\Gb=(I,d,E,\lambda)$ is {\it compact}, if $(I,d)$ is a
compact metric space, $E$ is a closed subset of $I\times I$, and for every
$\eps>0$ and integer $r\ge 0$ there is a $\delta>0$ such that for every pair
$x,y\in I$ with $d(x,y)\le\delta$ there exists an $r$-neighborhood isomorphism
$\phi$ between $x$ and $y$ with $d(\phi)\le \eps$.

We say that a graphing $\Gb=(I,\AA,E,\lambda)$ is a {\it full subgraphing} of a
graphing $\Gb'=(I',\AA',E',\lambda')$, if $\Gb$ is the union of connected
components of $\Gb'$, $I$ is a Borel subset of $I'$, $\AA=\AA'|_I$, and
$\lambda'(X)=\lambda(I\cap X)$ for every Borel subset $X\subseteq I'$. In
particular, $\lambda'(I)=\lambda(I)=1$, so $\lambda'(I'\setminus I)=0$.

The following was proved in \cite{Lov19}.

\begin{prop}\label{PROP:COMP}
Every graphing is a full subgraphing of a compact graphing.
\end{prop}

\subsection{Hyperfinite graphings}\label{SEC:HYPFIN}

There is an important special class of very ``slim'' graphings. For a
graphing $\Gb$, a set $T$ of edges will be called {\it
$k$-splitting}, if every connected component of $\Gb\setminus T$ has
at most $k$ nodes. We denote by $\sep_k(\Gb)$ the infimum of
$\eta(T)$, where $T$ is a $k$-splitting Borel set of edges. A
graphing $\Gb$ is {\it hyperfinite}, if $\sep_k(\Gb)\to0$ as
$k\to\infty$.

The following fact about hyperfiniteness is so basic and natural, that is quite
surprising that its proof is nontrivial.

\begin{theorem}\label{THM:HYP-W-ISO}
Let $\Gb_1$ and $\Gb_2$ be locally equivalent graphings. If $\Gb_1$
is hyperfinite, then so is $\Gb_2$.
\end{theorem}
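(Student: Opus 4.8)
The plan is to combine Proposition~\ref{PROP:LOC-EQ} with two one-sided statements about a single local isomorphism, and to attack the harder one by relaxing a $k$-splitting edge set to a measure, pushing the measure forward, and then distilling a new $k'$-splitting set from it.

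By Proposition~\ref{PROP:LOC-EQ} there is a graphing $\Gb$ with local isomorphisms $\phi_1:\Gb\to\Gb_1$ and $\phi_2:\Gb\to\Gb_2$, so it suffices to prove, for an arbitrary local isomorphism $\phi:\Gb\to\Gb'$, that (a) $\Gb'$ hyperfinite implies $\Gb$ hyperfinite, and (b) $\Gb$ hyperfinite implies $\Gb'$ hyperfinite; applying (a) to $\phi_1$ and then (b) to $\phi_2$ yields the theorem. Part (a) is the easy ``pulling back'' direction: if $T'\subseteq E(\Gb')$ is a Borel $k$-splitting set, then so is $T=(\phi\times\phi)^{-1}(T')\cap E(\Gb)$, since $\phi$ restricts to a graph isomorphism on every connected component (hence each component of $\Gb\setminus T$ is isomorphic to one of $\Gb'\setminus T'$), and $\eta_\Gb(T)=\eta_{\Gb'}(T')$ because $(\phi\times\phi)_*\eta_\Gb=\eta_{\Gb'}$; this last identity follows from $\deg_{\phi^{-1}(B')}(x)=\deg_{B'}(\phi(x))$ (componentwise isomorphism again) together with the fact that $\phi$ is measure preserving. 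Thus $\sep_k(\Gb)\le\sep_k(\Gb')$ for every $k$, and (a) follows.

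For the substantive part (b), fix $\eps>0$ and, using hyperfiniteness of $\Gb$, a Borel $k$-splitting set $T_0\subseteq E(\Gb)$ with $\eta_\Gb(T_0)<\eps$. One cannot simply form the image $\phi(T_0)$: it need not be Borel, and — since several components of $\Gb$ may lie over a single component of $\Gb'$ — the images of $T_0$ along the various preimage components give \emph{different}, mutually incompatible cuts of that component. This is exactly the ``pushing forward a set'' difficulty of the introduction, and the remedy is to push forward a measure instead: $\one_{T_0}\,d\eta_\Gb\le d\eta_\Gb$, so its image under $\phi\times\phi$ is dominated by $(\phi\times\phi)_*\eta_\Gb=\eta_{\Gb'}$ and has a Borel density $g:E(\Gb')\to[0,1]$ with $\int g\,d\eta_{\Gb'}=\eta_\Gb(T_0)<\eps$. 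One checks that, on each component of $\Gb'$, $g$ is a convex combination (weighted by the disintegration of $\lambda_\Gb$ over $\lambda_{\Gb'}$) of indicators of honest $k$-splitting sets of that component. The remaining task — to \emph{distill} from $g$ a single Borel $k'$-splitting set $T'$ of $\Gb'$ with $\eta_{\Gb'}(T')$ small, where $k'$ may exceed $k$ and ``small'' tends to $0$ as $\eps\to0$ — is carried out by a greedy algorithm that builds $T'$ piece by piece, the infinite analogue of the finite minimum-$k$-cut algorithm, with $g$ steering the choices and with a performance guarantee proved by linear programming duality; passing first to a compactification of $\Gb'$ ensures that the relevant extrema are attained, so that the process terminates with the claimed bound.

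The main obstacle is precisely this distillation step. The relaxation and the push-forward are routine once one notices that measures, unlike sets, push forward freely; what is delicate is that $g$ encodes only a \emph{componentwise} convex combination of honest cuts of $\Gb'$, and there is in general no measurable way to choose ``the same sheet'' over all components simultaneously (the fibre relation is a countable Borel equivalence relation, which need not admit a Borel transversal). Bridging this gap is where the greedy/LP-duality/compactification machinery does its work, and the bounded-degree hypothesis is used throughout, keeping components countable and the local pieces finite. Since the full argument for the infinite case is given in \cite{Hombook}, only a sketch follows.
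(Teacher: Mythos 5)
Your overall architecture (reduce to a single local isomorphism via Proposition~\ref{PROP:LOC-EQ}, pull back for the easy direction, relax--push--distill for the hard one) matches the paper, and your part (a) is exactly the paper's pull-back step. But in part (b) there is a genuine gap at the distillation stage, and it is caused by your choice of relaxation. You relax the $k$-splitting set $T_0$ to an \emph{edge density} $g:E(\Gb')\to[0,1]$ obtained as the Radon--Nikodym derivative of the pushed-forward measure $\one_{T_0}\,d\eta_\Gb$. Neither this paper nor the machinery it cites contains a rounding theorem whose input is such a $g$: the rounding result that exists (Theorem~\ref{THM:SEP-INF}, proved in \cite{Hombook} by a phased greedy using Borel combinatorics) takes as input a \emph{fractional $\RR_k$-partition}, i.e.\ a measure $\tau$ on the space $\RR_k$ of connected vertex sets with marginal $\lambda$. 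A bare edge density of small total mass cannot be rounded (constant $g\equiv\delta$ on a non-hyperfinite graphing is a counterexample), so everything hinges on your structural claim that $g$ is, componentwise, a convex combination of indicators of honest $k$-splitting sets. You assert this via ``the disintegration of $\lambda_\Gb$ over $\lambda_{\Gb'}$,'' but making it precise requires showing that the fibre measures over the points of a single component of $\Gb'$ are consistent under the sheet identifications along edges (a holonomy-invariance statement), and then converting that componentwise structure, measurably, into a fractional $\RR_k$-partition before any greedy argument can be invoked. These are exactly the nontrivial steps, and they are the ones you leave to ``the greedy/LP-duality/compactification machinery,'' which does not address them.

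The repair is to skip $g$ altogether and push forward the right object: the measure on $\RR_k(\Gb)$ given by the law of the component of a random point of $\Gb\setminus T_0$ (equivalently, any fractional $\RR_k$-partition witnessing $\sep_k^*(\Gb)\le\sep_k(\Gb)<\eps$). Its pushforward under $\wh\phi$ is a fractional $\RR_k$-partition of $\Gb'$ with the same expected expansion, giving $\sep_k^*(\Gb')<\eps$, and Theorem~\ref{THM:SEP-INF} then yields $\sep_k(\Gb')\le\eps\log(8D/\eps)$ --- the crucial point being that this bound is independent of $k$. Note also that you misplace the auxiliary tools: LP duality and compactification are used in the paper only for the dual characterization (Theorem~\ref{THM:DUAL}) and for the nontrivial half of Theorem~\ref{THM:SEP-STAR}, neither of which is needed here; the paper explicitly uses only the trivial (push-forward) half of Theorem~\ref{THM:SEP-STAR} together with the nontrivial half of Theorem~\ref{THM:SEP-INF}, whose greedy analysis does not rely on duality or on compactness.
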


This theorem was first proved in \cite{Hombook}. It is closely related to an
earlier theorem of Schramm \cite{Schramm1} about hyperfiniteness of locally
convergent graph sequences; we'll come back to this connection in the last
section. Independently, Elek \cite{Elek12} derived this result from a theorem
of Kaimanovich \cite{Kaim}. The proof in \cite{Hombook} yields an explicit
relationship between the values $\sep_k(\Gb_1)$ and $\sep_k(\Gb_2)$ (see
Corollary \ref{COR:HYP-W-ISO} below.).

Note that the stronger statement that $\sep_k(\Gb_1)=\sep_k(\Gb_2)$
is not true (Example 21.12 in \cite{Hombook}). An analogous
statement, however, will be true for the measure version (see Theorem
\ref{THM:SEP-STAR} below).

To illustrate the ``push forward -- pull back'' problem discussed in
the introduction, let us start to prove this theorem. Let $\eps>0$;
we want to prove that there is a $k\ge1$ such that
$\sep_k(\Gb_2)<\eps$; in other words, there is a $k$-splitting Borel
set $T_2\subseteq E(\Gb_2)$ such that $\eta_2(T_2)<\eps$ (where
$\eta_i$ denotes the edge measure of $\Gb_i$). By definition, there
is a $k\ge1$ and a $k$-splitting Borel set $T_1$ for $\Gb_1$ with
$\eta_1(T_1)<\eps$.

By Proposition \ref{PROP:LOC-EQ}, there is a third graphing $\Gb$
having local isomorphisms $\phi_1:~\Gb\to \Gb_1$ and $\phi_2:~\Gb\to
\Gb_2$. We can pull back the set $T_1$ to $\Gb$: the set
$T=\phi_1^{-1}(T_1)$ satisfies $\eta(T)=\eta_1(T_1)<\eps$, and (since
$\phi_1$ is a local isomorphism from $\Gb\setminus T$ to
$\Gb_1\setminus T_1$) the connected components of $\Gb\setminus T$
have no more than $k$ nodes. This shows that $\sep_k(\Gb)<\eps$.

To complete the proof, we would like to ``push forward'' the set $T$
to $\Gb_2$; but we have no control over what happens to its measure.
To get around this difficulty, we introduce a fractional version of
the $k$-splitting problem, which is defined in terms of a measure,
and thus it can be pushed forward in a manageable way. But we lose by
this, and the main step will be to estimate the loss.

\subsection{Convergent graph sequences}

Graphings were introduced (at least in this setting) as limit objects
of locally convergent sequences of bounded degree graphs \cite{AlLy,
Elek1}. Let us sketch this connection.

The probability distribution $\rho_{G,r}$ on $r$-balls can be defined
for finite graphs just as for graphings. We say that a sequence
$(G_n:~n=1,2,\dots)$ of finite graphs is {\it locally convergent}, if
for every $r\ge1$, the probability distributions $\rho_{G_n,r}$
converge (note that these distributions are defined on the same
finite set of $r$-balls, independently of $G_n$). This notion of
convergence was introduced by Benjamini and Schramm \cite{BS}.

We say that $G_n\to\Gb$ (where $\Gb$ is a graphing), if
$\rho_{G_n,r}\to\rho_{\Gb,r}$ for every $r\ge 1$. For every locally
convergent graph sequence $(G_n:~n=1,2,\dots)$ there is a graphing
$\Gb$ such that $G_n\to\Gb$. This fact can be derived from the work
of Benjamini and Schramm; it was stated explicitly in \cite{AlLy} and
\cite{Elek1}. It is clear that a convergent graph sequence determines
its limit up to local equivalence only.

\begin{remark}\label{REM:BS}
Benjamini and Schramm describe a limit object in the form of a
probability distribution on rooted countable graphs with degrees at
most $D$, with a certain ``unimodularity'' condition. This limit
object is unique. Graphings contain more information than what is
passed on to the limit. Among others, they can represent limits of
sequences that are convergent in a stronger sense called {\it
local-global convergence} \cite{HLSz}. But for us exactly the weaker
notion of convergence, and the uncertainty in the limit object it
introduces (local equivalence), is interesting.
\end{remark}

For a finite graph, the definition of hyperfiniteness makes no sense
(every finite graph is hyperfinite); we have to move to infinite
families of graphs. A family of finite graphs is {\it hyperfinite},
if for every $\eps>0$ there is an integer $k\ge1$ such that every
graph $G$ in the family satisfies $\sep_k(G)\le\eps$. Many important
families of graphs (with a fixed degree bound) are hyperfinite:
trees, planar graphs, and more generally, every non-trivial
minor-closed family \cite{BSchSh}. As a non-hyperfinite family, let
us mention any expander sequence.

The connection between hyperfinite graph families and hyperfinite
graphings is nice, and as it turns out, nontrivial:

\begin{theorem}\label{THM:HYPFIN}
Let $(G_n:~n=1,2,\dots)$ be a sequence of finite graphs with all
degrees bounded by $D$, locally converging to a graphing $\Gb$. Then
$\Gb$ is hyperfinite if and only if the family $\{G_n:~n=1,2,\dots\}$
is hyperfinite.
\end{theorem}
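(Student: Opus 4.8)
The plan is to prove the two implications separately, using Theorem \ref{THM:HYP-W-ISO} to reduce both to a statement about the specific graphing $\Gb$, and then transferring between $\Gb$ and the sequence $(G_n)$ by a finite-approximation argument. For the direction ``family hyperfinite $\Rightarrow$ $\Gb$ hyperfinite'': fix $\eps>0$, pick $k$ with $\sep_k(G_n)\le\eps$ for all $n$. The key point is that the local statistics $\rho_{G_n,k+1}$ encode, for each possible $(k{+}1)$-ball type, which edges at the root one could cut in an optimal splitting; one wants to describe a $k$-splitting edge set of $\Gb$ combinatorially in terms of bounded-radius neighborhoods. First I would note that any $k$-splitting set $T_n$ in $G_n$ can be ``localized'': whether an edge $uv\in T_n$ and which component $u$ lands in after deleting $T_n$ is determined by looking only inside $B_{G_n}(u,k)$, since components have $\le k$ nodes. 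Passing to the limit, the distribution of ``rooted graph together with a marked $k$-splitting pattern in its $k$-ball'' has limit points (by compactness of distributions on a finite set of decorated balls), and any limit point gives a measurable edge set $T\subseteq E(\Gb)$ — defined by reading off the pattern from $B_\Gb(x,k)$ — which is $k$-splitting and has $\eta(T)\le\eps$ (the inequality is preserved in the limit because $\eta(T)$ is a continuous functional of the decorated $(k{+}1)$-ball statistics). Hence $\sep_k(\Gb)\le\eps$, and $\Gb$ is hyperfinite.

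For the converse, ``$\Gb$ hyperfinite $\Rightarrow$ family hyperfinite'': this is where the real difficulty lies, because a measurable $k$-splitting set in $\Gb$ of small measure need not ``descend'' to the finite graphs $G_n$ — there is no map $G_n\to\Gb$, only convergence of local statistics. The plan is again to work with bounded-radius data. Fix $\eps>0$ and, using hyperfiniteness of $\Gb$, choose $k$ with $\sep_k(\Gb)<\eps/2$, so there is a $k$-splitting Borel set $T\subseteq E(\Gb)$ with $\eta(T)<\eps/2$. The issue is that $T$ may depend on arbitrarily large neighborhoods; so the first real step is a \emph{finitization} lemma: one may choose $T$ so that membership $e\in T$, and the resulting component structure, is determined by $B_\Gb(x,R)$ for some finite $R=R(k,\eps)$, at the cost of increasing $k$ to some $k'$ and the measure to $\eps$. (This is plausible because $T$ already chops $\Gb$ into pieces of size $\le k$, so after throwing in a few more edges near the ``seams'' one gets a rule that is both local and almost as cheap; one formalizes it by approximating the Borel function $x\mapsto$ (pattern of $T$ near $x$) by one measurable with respect to the finite $\sigma$-algebra of $R$-ball types up to small error, then repairing.)

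Once $T$ is a bounded-radius local rule, it makes sense on \emph{any} bounded-degree graph, in particular on each $G_n$: define $T_n\subseteq E(G_n)$ by applying the same rule to each $R$-ball in $G_n$. Because $\rho_{G_n,R'}\to\rho_{\Gb,R'}$ for all $R'$ (take $R'$ slightly larger than $R$ so that both $\eta$-mass and the component sizes are readable from $R'$-balls), we get $\eta_n(T_n)\to\eta(T)<\eps$ and, for $n$ large, every component of $G_n\setminus T_n$ has at most $k'$ nodes — the ``no large component'' property is again a local condition readable from bounded balls, and a rooted $R'$-ball that lies in a component of size $>k'$ would contribute positive mass in $\Gb$ as well, contradiction. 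This handles all but finitely many $G_n$; the finitely many exceptions are individually hyperfinite (every finite graph is), so after possibly enlarging $k'$ the whole family satisfies $\sep_{k'}(G_n)\le\eps$. Letting $\eps\to0$ completes this direction.

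The main obstacle is the finitization lemma in the second paragraph: turning an arbitrary measurable small-cost $k$-splitting set of $\Gb$ into one given by a \emph{bounded-radius} local rule, with only a controlled loss in cost and component size. Everything else is a soft compactness/continuity argument about distributions on finitely many decorated balls, of the same flavor as the Benjamini–Schramm and Schramm \cite{Schramm1} arguments. I would expect the clean way to package the finitization is through the fractional ($k$-splitting) relaxation and its pushforward discussed after Theorem \ref{THM:HYP-W-ISO}: solve the relaxation, which is automatically a ``measure'' object and hence behaves well under the approximations, and only at the end round back to an integral edge set — mirroring the linear-relaxation-then-round paradigm that the paper is advertising.
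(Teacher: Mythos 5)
First, a point of comparison: the paper itself does not prove Theorem \ref{THM:HYPFIN}; it attributes it to Schramm, to Benjamini--Schramm--Shapira, and to the partitioning algorithm of Hassidim et al., and notes that the proof in \cite{Hombook} rests on Theorem \ref{THM:HYP-W-ISO}. So your proposal can only be judged on its own merits, and as it stands both directions have genuine gaps. In the direction ``family hyperfinite $\Rightarrow\Gb$ hyperfinite'', the fatal step is the claim that a limit point of the decorated-ball statistics of $(G_n,T_n)$ ``gives a measurable edge set $T\subseteq E(\Gb)$, defined by reading off the pattern from $B_\Gb(x,k)$''. The limiting decorated distribution is not a function of ball types (one undecorated type typically carries several decorations with positive probability), and, more seriously, a prescribed decorated limit need not be realizable as a Borel decoration of the \emph{given} representative $\Gb$ at all: even cycles with a proper $2$-colouring converge locally, while the irrational-rotation graphing $C_\alpha$ of Example \ref{EXA:CIRC}, which represents the limit of cycles, carries no measurable proper $2$-colouring (ergodicity of the rotation forces any such colour class to have measure $0$ or $1$). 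Realizing limit decorations on a fixed representative is precisely the ``push-forward'' difficulty the paper is about, and is as hard as Theorem \ref{THM:HYP-W-ISO} itself. The correct packaging, which you announce but do not actually use, is: represent the decorated limit by \emph{some} graphing $\Gb'$ with a Borel $k$-splitting set of small edge measure, observe that $\Gb'$ is locally equivalent to $\Gb$, and only then invoke Theorem \ref{THM:HYP-W-ISO}.

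In the converse direction the proposal hinges on the ``finitization lemma'': that a hyperfinite graphing admits a cheap $k'$-splitting set given by a deterministic rule depending only on the isomorphism type of $R$-balls. This lemma is false. In $C_\alpha$ every $R$-ball has the same isomorphism type (a rooted path), so any type-based rule cuts either no edges or the same edges at every vertex, and hence cannot produce a splitting set that is both finite-component and of small measure, although $C_\alpha$ is hyperfinite. Your proposed justification --- approximate the Borel data by sets measurable with respect to ``the finite $\sigma$-algebra of $R$-ball types'' --- collapses for the same reason: that $\sigma$-algebra can be essentially trivial. Any repair must inject independent randomness (i.i.d.\ vertex labels, i.e.\ pass to a Bernoulli extension, which again brings in local equivalence and Theorem \ref{THM:HYP-W-ISO}) or avoid local rules altogether, e.g.\ via the fractional relaxation: Theorem \ref{THM:SEP-STAR} moves $\sep_k^*$ between locally equivalent graphings and Theorem \ref{THM:SEP-INF} rounds back to integral splittings, but one would still need a finite-versus-graphing comparison of $\sep_k^*(G_n)$ with $\sep_k^*(\Gb)$, which is nowhere established in your sketch (nor in this paper). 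Since this transfer from a measurable splitting of $\Gb$ to splittings of the finite $G_n$ is exactly the substantive content of the theorem (it is what Schramm's and the HKNO arguments accomplish), labelling it a plausible approximation-and-repair step leaves the central difficulty unproved.
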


This theorem is due to Schramm \cite{Schramm1} (with a somewhat
sketchy proof). A complete proof based on other methods was described
by Benjamini, Schramm and Shapira \cite{BSchSh}. A third proof could
be based on the graph partitioning algorithm of Hassidim, Kelner,
Nguyen and Onak \cite{HKNO}. The proof in \cite{Hombook}, which is
based on Theorem \ref{THM:HYP-W-ISO} above, is perhaps closest to
Schramm's original method, although cast in a different form.

\section{Combinatorial version}

\subsection{Graph partitioning}

In this section, we discuss the finite version of the main tool in
the proof of Theorem \ref{THM:HYP-W-ISO}. Let $G=(V,E)$ be a finite
graph on $n$ nodes. The notion of $k$-splitting edge sets can be
defined for $G$ just as for graphings. We denote by $\sep_k(G)$ the
minimum of $|T|/n$, where $T$ is a $k$-splitting set of edges.

We formulate a relaxation of the problem of computing $\sep_k(G)$,
which can be expressed as a linear program. There are many ways to do
so (as usual); we choose one which is perhaps not the simplest, but
which will generalize to graphings easily.

Let $\RR_k=\RR_k(G)$ denote the set of subsets $A\subseteq V$ with
$1\le |A|\le k$ that induce a connected subgraph of $G$. An {\it
$\RR_k$-partition} of $V$ is a disjoint subfamily $\FF\subseteq\RR_k$
covering every node. For $A\subseteq V$, let $\partial A$ denote the
set of edges connecting $A$ to $V\setminus A$. We can express
$\sep_k(G)$ as
\begin{equation}\label{EQ:SEP-PART}
\sep_k(G)= \min_\FF \frac1{2n} \sum_{A\in\FF} |\partial A|,
\end{equation}
where $\FF$ ranges over all $\RR_k$-partitions. Indeed, if $T$ is
$k$-splitting, then the components of $G\setminus T$ form an
$\RR_k$-partition $\FF$ with $|T| = \frac12 \sum_{A\in\FF} |\partial
A|$. Conversely, for every $\RR_k$-partition $\FF$, the set
$T=\cup_{A\in\FF}\partial A$ is $k$-splitting, and $\sum_{A\in\FF}
|\partial A| = 2|T|$ (since every edge in $T$ is counted with two
sets $A$).

This suggests the following relaxation: A weighting $x:~\RR_k\to\R$
is a {\it fractional $\RR_k$-partition}, if
\begin{equation}\label{EQ:FR-COV}
x_A\ge0,\qquad \sum_{A\in\RR_k\atop A\ni v} x_A=1\qquad(\forall v\in V).
\end{equation}
We define
\begin{equation}\label{EQ:SEP-STAR}
\sep_k^*(G) = \min_x \frac1{2n} \sum_{A\in\RR_k} x_A |\partial A|,
\end{equation}
where $x$ ranges over all fractional $\RR_k$-partitions. The
indicator function of an $\RR_k$-partition is a fractional
$\RR_k$-partition, and hence
\begin{equation}\label{EQ:SIGMA-TRIV}
\sep^*_k(G)\le \sep_k(G).
\end{equation}
Equality does not hold in general: the triangle has $\sep_2(K_3)=2/3$
but $\sep_2^*(K_3)=\frac12$. But we have the following weak converse:

\begin{theorem}\label{THM:SEP}
For every finite graph $G$ with maximum degree $D$,
\[
\sep_k(G)\le \sep_k^*(G)\Bigl(2+\ln\frac{D}{2\sep_k^*(G)}\Bigr).
\]
\end{theorem}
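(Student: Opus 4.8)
The plan is to round the linear-programming optimum $\sep_k^*(G)$ to an honest $\RR_k$-partition, losing only a logarithmic factor, by a greedy ``peeling'' algorithm whose cost is controlled through a harmonic sum. Fix an optimal fractional $\RR_k$-partition $x$, so that $\frac1{2n}\sum_{A\in\RR_k}x_A|\partial A|=\sep_k^*(G)=:\sigma^*$, and recall that for an honest $\RR_k$-partition $\FF$ the objective in \eqref{EQ:SEP-PART} equals $|T|/n$ with $T=\bigcup_{A\in\FF}\partial A$ the associated $k$-splitting set; so it suffices to build an $\RR_k$-partition cutting at most $n\sigma^*\bigl(2+\ln\frac{D}{2\sigma^*}\bigr)$ edges. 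Run the greedy: maintain the set $U$ of not-yet-covered vertices, initially $U=V$; while $U\neq\emptyset$, choose a connected set $C\subseteq U$ with $1\le|C|\le k$ minimizing the density $\delta(C):=|\partial_{G[U]}C|/|C|$, where $\partial_{G[U]}C$ is the set of edges of $G$ from $C$ to $U\setminus C$; add $C$ to $\FF$ and delete it from $U$. Singletons are always eligible, so the process ends with an $\RR_k$-partition $\FF=\{C_1,C_2,\dots\}$; write $U_i$ for the value of $U$ before step $i$.

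The analysis uses two facts. First, an exact bookkeeping identity: the number of edges cut by $\FF$ equals $\sum_i|\partial_{G[U_i]}C_i|=\sum_i\delta(C_i)|C_i|$. Indeed, for an edge $uv$ with $u\in C_a$, $v\in C_b$ and $a\le b$: if $a<b$ it lies in $\partial_{G[U_i]}C_i$ for exactly one index, $i=a$ (at that step $u\in C_a$ and $v\in U_a\setminus C_a$, while at every later step at least one endpoint has already been removed and at every earlier step neither endpoint lies in $C_i$), and if $a=b$ it lies in no such set; thus each cut edge is counted once and no other edge at all. Second, a density estimate: at every step, $\min_C\delta(C)\le\min\{D,\,2n\sigma^*/|U|\}$. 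The bound $D$ is witnessed by a singleton $C=\{v\}$. For the other bound, restrict $x$ to $U$: for each $A$ in the support of $x$, and for each connected component $C$ of $G[A\cap U]$, add $x_A$ to a new weight $y_C$. Each $A$ containing a given $v\in U$ contributes exactly one component through $v$, so $y$ is a fractional $\RR_k$-partition of $G[U]$; hence $\sum_C y_C|C|=|U|$, and since distinct components of an induced subgraph span no edge, $\sum_{C\text{ a component of }G[A\cap U]}|\partial_{G[U]}C|=|\partial_{G[U]}(A\cap U)|\le|\partial A|$ for each fixed $A$, so $\sum_C y_C|\partial_{G[U]}C|\le\sum_A x_A|\partial A|=2n\sigma^*$. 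The minimum of $\delta$ over eligible sets is at most the $y$-weighted average of the $\delta(C)$, which is the quotient of these two sums.

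Combining these, list the steps so that $|U|$ decreases from $n$ to $0$ through $n=u_1>u_2>\cdots$; as $u\mapsto\min\{D,2n\sigma^*/u\}$ is non-increasing, $\delta(C_i)|C_i|\le\sum_{j=u_{i+1}+1}^{u_i}\min\{D,2n\sigma^*/j\}$, and summing over $i$,
\[
\#\{\text{edges cut by }\FF\}\ \le\ \sum_{j=1}^{n}\min\Bigl\{D,\ \frac{2n\sigma^*}{j}\Bigr\}.
\]
Splitting the sum at $j_0\approx 2n\sigma^*/D$---a flat part of total $\approx j_0 D\approx 2n\sigma^*$ for $j\le j_0$, a harmonic part $2n\sigma^*\sum_{j>j_0}j^{-1}\lesssim 2n\sigma^*\ln(n/j_0)=2n\sigma^*\ln\frac{D}{2\sigma^*}$ for $j>j_0$---already yields $\sep_k(G)\le 2\sigma^*\bigl(1+\ln\frac{D}{2\sigma^*}\bigr)$: the claimed shape, up to a constant.

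The main obstacle is trimming this constant to the stated $2+\ln\frac{D}{2\sigma^*}$. The slack is introduced in the density estimate, where a minimum is compared with an \emph{average} density over the restricted LP solution---effectively twice the fractional cut-density of $G[U]$. To recover the missing factor one must exploit the boundary that earlier peels create in $G[U]$: sets hugging this boundary are cheap, so away from the very first steps the estimate $2n\sigma^*/|U|$ should be improvable to roughly $n\sigma^*/|U|$, and then $1+\ln\frac{D}{\sigma^*}\le 2+\ln\frac{D}{2\sigma^*}$ closes the gap. A dual-fitting analysis of the same greedy against the dual of the partition linear program (whose optimum is again $2n\sigma^*$) is an alternative route to the sharp constant. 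This sharpening, not the greedy itself, is where the work lies.
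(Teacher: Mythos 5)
Your greedy and both of your supporting facts are correct (the bookkeeping identity counting each cut edge exactly once at the step where its earlier endpoint is peeled, and the density estimate $\min_C\delta(C)\le\min\{D,\,2n\sigma^*/|U|\}$ obtained by restricting the fractional partition to $G[U]$), but the argument as written only yields $\sep_k(G)\le \sep_k^*(G)\bigl(2+2\ln\frac{D}{2\sep_k^*(G)}\bigr)$, not the stated bound, and you say so yourself. The sharpening you gesture at (improving $2n\sigma^*/|U|$ to roughly $n\sigma^*/|U|$ after the first peels, or a dual-fitting analysis) is not carried out, so the theorem as stated is not proved. The factor $2$ is not an artifact of sloppy summation: your primal accounting charges every cut edge exactly once (via the residual boundaries $\partial_{G[U_i]}C_i$), while the benchmark you compare the greedy against, $\sum_A x_A|\partial A|=2n\sigma^*$, charges every boundary edge of the fractional partition (fractionally) twice, once from each side; likewise $\sum_C y_C|\partial_{G[U]}C|$ double counts edges running between two distinct fractional parts. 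Since your primal count admits no halving at the end, this mismatch survives into the harmonic sum and lands inside the logarithm. (For the intended application -- invariance of hyperfiniteness -- your weaker bound would in fact suffice, since only the shape $\sep_k^*\log(1/\sep_k^*)$ matters; but it does not establish the inequality claimed.)

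The paper avoids this loss by a different bookkeeping. It reduces Theorem \ref{THM:SEP} to a hypergraph covering lemma (Theorem \ref{THM:HYPERGR}) and runs the greedy with the \emph{static} cost $w(Y)=|\partial_G Y|/D$ (boundary in the original graph), minimizing cost per newly covered vertex; then both the greedy's total $\sum_{A\in\FF}|\partial A|$ and the LP optimum double count cut edges, the division by two is performed only at the very end through $|T|=\frac12\sum_{A\in\FF}|\partial A|$, and the constant $2+\ln\frac{D}{2\sep_k^*}$ comes from a summation-by-parts/integral comparison together with the normalization $w(\{v\})\le1$. Note that your residual-boundary greedy cannot simply adopt this static-cost accounting: for a component $C$ of $G[A\cap U]$, the full boundary $\partial_G C$ may contain edges into $A\setminus U$ that are not in $\partial_G A$, which is presumably why you passed to $G[U]$-boundaries in the first place -- and that is exactly the point where the once-counted/twice-counted mismatch is created. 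So either switch to the paper's accounting (static costs, halve at the end), or genuinely prove the refined density estimate you conjecture; as it stands, this step is the gap.
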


This is the finite version of a result for graphings (Lemma 21.10 in
\cite{Hombook}). As we will see, it is crucial that the upper bound
depends on $\sep_k^*(G)$ and $D$ only, not on $k$ or $n$.

We give the proof of this theorem in the next section, in a more
general form. To get this more general form, we modify the
$k$-partition problem by looking for covering subgraph-families
rather than partitions. This does not change the value of $\sep_k$.
More exactly,
\begin{equation}\label{EQ:SEP-COV}
\sep_k(G)=\min_\FF\frac1{2n} \sum_{A\in\FF} |\partial A|,
\end{equation}
where $\FF$ ranges over families $\FF\subseteq\RR_k$ covering every
node. Indeed, allowing more families $\FF$ could only lower the
minimum. On the other hand, consider a covering family $\FF$
minimizing \eqref{EQ:SEP-COV}. If this consists of disjoint sets, we
are done. Suppose not, and let $A_1,A_2\in\FF$ such that $A_1\cap
A_2\not=\emptyset$. Let $e_1$ denote the number of edges between
$A_1\setminus A_2$ and $A_1\cap A_2$, and define $e_2$ similarly. We
may assume that $e_1\le e_2$. Replacing $A_1$ by $A_1'=A_1\setminus
A_2$, we still have a covering family, and since $|\partial A_1'|\le
|\partial A_1| + e_1 -e_2 \le |\partial A_2|$, we have another
optimizer in \eqref{EQ:SIGMA-DEF}. We can repeat this until all sets
in $\FF$ will be disjoint.

This suggests that we could use another linear relaxation. We define
a {\it fractional cover} as a vector $x\subseteq\R_+^{\RR_k}$ such
that $\sum_{A\ni v} x_A\ge 1$ for all $v\in V$. We define
\begin{equation}\label{EQ:SEP-STAR2}
\sep_k^{**}(G) = \min_x \frac1{2n} \sum_{A\in\RR_k} x_A |\partial A|,
\end{equation}
where $x$ ranges over all fractional covers. It is clear that
\[
\sep_k^{**}(G) \le \sep_k^*(G),
\]
and I don't know whether strict inequality can ever hold here.
Otherwise, there is not much difference in the behavior of these two
relaxations. In particular, Theorem \ref{THM:SEP} would remain valid
with $\sep_k^{**}$ instead of $\sep_k^*$. We are going to use
whichever is more convenient.

\subsection{Generalization to hypergraphs}

The theorem can be generalized to hypergraphs. Let $\HH$ be a
hypergraph on node set $V$, without isolated nodes, and let
$w:~\HH\to\R_+$ be an edge-weighting. An {\it edge-cover} is a subset
$\FF \subseteq \HH$, such that $\cup\FF=V$. A {\it fractional
edge-cover} is an assignment of weights $X:~\HH\to\R_+$ such that
\begin{equation}\label{EQ:FRAC-COV}
\sum_{A\ni v} x_A \ge 1\quad(\forall v\in V).
\end{equation}
Define
\begin{equation}\label{EQ:SIGMA-DEF}
\sigma=\sigma(\HH,w)= \min_\FF \frac{1}{n}\sum_{A \in \FF} w(A),
\end{equation}
where $\FF$ ranges over all edge-covers, and
\begin{equation}\label{SIGMA-ST-DEF}
\sigma^*=\sigma^*(\HH,w) = \min_x \frac{1}{n}\sum_{A\in\HH}  w(A) x_A,
\end{equation}
where $x$ ranges over all fractional edge-covers. Note that the
covering by singletons is in the competition, hence $\sigma^*\le 1$.

\begin{theorem}\label{THM:HYPERGR}
Let $\HH$ be a hypergraph on node set $V$ with $|V|=n$, such that $\{x\}\in\HH$
for each $x\in V$. Let $w:~\HH\to\R_+$ be an edge-weighting such that
$w(\{x\})\le1$ for every $x\in V$. Then
\[
\sigma^*\le \sigma \le \sigma^*\Bigl(2+ \ln\frac1{\sigma^*}\Bigr).
\]
\end{theorem}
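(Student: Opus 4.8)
The plan is to prove Theorem~\ref{THM:HYPERGR} by the classical greedy / LP-duality argument for set cover, carefully tracking constants so that the bound depends only on $\sigma^*$. The lower bound $\sigma^*\le\sigma$ is immediate, since every edge-cover $\FF$ yields a fractional edge-cover by taking $x_A=\one[A\in\FF]$. The content is the upper bound $\sigma\le\sigma^*(2+\ln(1/\sigma^*))$.

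First I would fix an optimal fractional edge-cover $x$, so $\sum_{A\in\HH}w(A)x_A = n\sigma^*$, and run the greedy algorithm \emph{weighted by cost density}: maintain a set $U$ of still-uncovered nodes (initially $U=V$), and repeatedly pick the edge $A$ minimizing the ratio $w(A)/|A\cap U|$, add it to $\FF$, and delete $A\cap U$ from $U$. The key density estimate is that at each stage there exists $A\in\HH$ with $w(A)/|A\cap U| \le (\sum_{B}w(B)x_B)/|U| \le n\sigma^*/|U|$: this follows because $\sum_B w(B)\,|B\cap U|\,x_B \ge \sum_{v\in U}\sum_{B\ni v}x_B \ge |U|$ (using the fractional covering constraint \eqref{EQ:FRAC-COV}), so some $B$ in the support of $x$ has $w(B)/|B\cap U|\le \sum_B w(B)x_B / |U|$. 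Hence when the greedy step covers $t$ new nodes while $|U|=u$, it pays at most $t\cdot n\sigma^*/u$, and over the whole run the cost is at most $n\sigma^*\sum_{j=1}^{n}\frac1j \le n\sigma^*(1+\ln n)$.

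The bound just obtained has a $\ln n$, which is exactly what we must \emph{not} have. The fix is the standard two-phase trick: run greedy only until $|U|$ drops below $\sigma^* n$ (more precisely $\lceil \sigma^* n\rceil$), and then cover each remaining node of $U$ by its own singleton $\{v\}$, which is allowed since $\{v\}\in\HH$ and costs $w(\{v\})\le 1$. Phase~one costs at most $n\sigma^*\sum_{j\ge \sigma^* n}^{n}\frac1j \le n\sigma^*\bigl(1+\ln\frac{n}{\sigma^* n}\bigr) = n\sigma^*(1+\ln\frac1{\sigma^*})$; phase~two costs at most $|U|\le \sigma^* n + 1$, i.e.\ roughly $n\sigma^*$. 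Summing and dividing by $n$ gives $\sigma \le \sigma^*(2+\ln\frac1{\sigma^*})$, up to handling the $+1$ from the ceiling, which I would absorb either by a limiting/normalization argument or by noting it only improves as $n\to\infty$ and checking small cases directly.

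The main obstacle is purely bookkeeping: making the harmonic-sum estimate $\sum_{j=\lceil\sigma^* n\rceil}^{n}\frac1j \le 1+\ln\frac1{\sigma^*}$ rigorous with the integer rounding, and confirming that the ``$+1$'' stray node in phase~two does not spoil the clean constant $2$ in the statement. One clean way around this is to observe the inequality is scale-free and tensorizes (replacing $\HH$ by a blow-up $\HH^{(m)}$ on $mn$ nodes multiplies both $\sigma$ and $\sigma^*$ essentially unchanged while sending the additive error to $0$), so it suffices to prove $\sigma \le \sigma^*(2+\ln\frac1{\sigma^*}) + o(1)$, which the two-phase greedy delivers directly.
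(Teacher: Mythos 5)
Your proposal is sound and runs on the same engine as the paper's proof: the greedy rule that picks the edge minimizing weight per newly covered node, the density estimate $w(A)/|A\cap U|\le n\sigma^*/|U|$ extracted from an optimal fractional cover (this is exactly the paper's inequality \eqref{EQ:MAIN}, rearranged), and the availability of singletons of weight at most $1$ to keep the tail cheap. The difference is in the accounting. The paper runs the greedy to completion and splits the \emph{analysis} at the step $a$ where the greedy ratio $w_i/y_i$ crosses $\sigma^*$: the early steps cost at most $\sigma^*(y_1+\cdots+y_{a-1})\le\sigma^*n$, and the later ones are summed via an Abel-summation/integral comparison of \eqref{EQ:MAIN}, using $w_a/y_a\ge\sigma^*$ and $w_m/y_m\le1$ (from \eqref{EQ:ONE}); this yields the constant $2+\ln\frac1{\sigma^*}$ exactly, with no integer rounding and no limiting argument. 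You instead stop the \emph{algorithm} once the uncovered set has size about $\sigma^*n$ and finish with singletons, which is why you end up fighting ceilings, a stray $+1$, and finally a blow-up argument.

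One step as written is not justified: the phase-one bound $n\sigma^*\sum_{j\ge\lceil\sigma^*n\rceil}1/j$ silently assumes every node covered in phase one has ``rank'' at least $\lceil\sigma^*n\rceil$, but the last greedy step of phase one may overshoot and swallow many nodes below that threshold; its cost can exceed what the harmonic sum allots to it by an amount of order $\sigma^*n$, so this is not an additive $O(1)$ error and the disjoint-copies trick does not absorb it. The repair is one line, and it also makes the tensorization unnecessary: charge each node covered at a greedy step that starts with $u\ge m$ uncovered nodes (where $m=\lceil\sigma^*n\rceil$) the amount $n\sigma^*/u$. A node of rank $j\ge m$ then pays at most $n\sigma^*/j$, while every node of rank $j<m$ --- whether swallowed by the overshooting step (charge $n\sigma^*/u\le n\sigma^*/m\le1$) or covered by a singleton in phase two (cost at most $1$) --- pays at most $1$, and there are only $m-1\le\sigma^*n$ such nodes. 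Hence the total cost is at most $n\sigma^*\bigl(\tfrac1m+\ln\tfrac nm\bigr)+(m-1)\le n\sigma^*\bigl(2+\ln\tfrac1{\sigma^*}\bigr)$, using $m\ge\sigma^*n$, $\tfrac1m\le1$ and $m-1\le\sigma^*n$. With this charging your argument gives the theorem exactly, for every finite $\HH$, without any limiting step.
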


\begin{proof}
The inequality $\sigma^*\le\sigma$ is trivial.

To prove the upper bound on $\sigma$, we use a version of the greedy
algorithm. For $i=1,2,\dots$, select edges $Y_1,Y_2,\dots,Y_m\in\HH$
so that $Y_i$ is a minimizer of
\[
\min_Y \frac{w(Y)}{|Y\setminus(Y_1\cup\dots\cup Y_{i-1})|}.
\]
(We don't consider edges $Y$ for which the denominator is zero.) We
stop when $\cup_iY_i=V$. Let $\FF=\{Y_1,\dots,Y_m\}$.

Set $y_i=|Y_i\setminus(Y_1\cup\dots\cup Y_{i-1})|$ and $w_i=w(Y_i)$.
We start with some simple inequalities. First, a partitioning into
singletons is a possibility in the definition of $\sigma$, and hence
\begin{equation}\label{EQ:SIGMA}
\sigma^*\le \sigma\le \frac1n\sum_{v\in V} w(\{v\})\le 1.
\end{equation}
When choosing $Y_i$ ($i<m$), any edge $A\in \HH$ with $A\setminus
(Y_1\cup\dots\cup Y_{i-1})\not=\emptyset$ was also available, but not
chosen. Hence
\begin{equation}\label{EQ:OPT}
\frac{w_i}{y_i}=\frac{w(Y_i)}{|Y_i\setminus(Y_1\cup\dots\cup Y_{i-1})|} \le
\frac{w(A)}{|A\setminus(Y_1\cup\dots\cup Y_{i-1})|}.
\end{equation}
In particular,
\[
\frac{w_i}{y_i}\le
\frac{w(Y_{i+1})}{|Y_{i+1}\setminus(Y_1\cup\dots\cup Y_{i-1})|} \le
\frac{w(Y_{i+1})}{|Y_{i+1}\setminus(Y_1\cup\dots\cup Y_i)|} = \frac{w_{i+1}}{y_{i+1}},
\]
and so
\begin{equation}\label{EQ:MON}
\frac{w_1}{y_1}\le \frac{w_2}{y_2}\le\dots\le\frac{w_m}{y_m}.
\end{equation}
When choosing $Y_m$, any singleton $v\in V\setminus(Y_1\cup\dots\cup
Y_{m-1})$ was available, showing that
\begin{equation}\label{EQ:ONE}
\frac{w_m}{y_m} \le \frac{w(\{v\})}{1}\le 1.
\end{equation}
We claim that
\begin{equation}\label{EQ:MAIN}
y_i+y_{i+1}+\dots+y_m \le \frac{y_i}{w_i} \sigma^*n.
\end{equation}
Indeed, if $x$ is the minimizer in the definition of $\sigma^*$, then
using \eqref{EQ:OPT},
\begin{align*}
y_i+ y_{i+1}+\dots+y_m &= |V\setminus(Y_1\cup\dots\cup Y_{i-1})|\\
&\le \sum_{v\in V\setminus(Y_1\cup\dots\cup Y_{i-1})}\ \sum_{A \ni v}
x_A= \sum_{A\in\HH} x_A\,|A\setminus(Y_1\cup\dots\cup Y_{i-1})|\\
&\le\sum_{A\in\HH} x_A\,\frac{y_i}{w_i} w(A) = \frac{y_i}{w_i}\sigma^*n.
\end{align*}

Now we turn to bounding $w(\FF)=w_1+\dots +w_m$. Let $1\le a\le m$ be
an integer such that
\[
\frac{w_{a-1}}{y_{a-1}}< \sigma^*\le \frac{w_a}{y_a}
\]
(possibly $a=0$ or $a=m+1$). Then
\[
w_1+\dots+w_{a-1}\le \sigma^*(y_1+\dots+y_{a-1}).
\]
For the remaining sum, multiply \eqref{EQ:MAIN} by $w_a/y_a$ for
$i=a$ and by $w_i/y_i-w_{i-1}/y_{i-1}$ for $i>a$, and sum the
inequalities. On the left side, the coefficient of $y_i$ will be
\[
\frac{w_a}{y_a}+\Bigl(\frac{w_{a+1}}{y_{a+1}}-\frac{w_a}{y_a}\Bigr)+\dots+
\Bigl(\frac{w_i}{y_i}-\frac{w_{i-1}}{y_{i-1}}\Bigr) = \frac{w_i}{y_i},
\]
and so on the left side we get
\[
\frac{w_a}{y_a}y_a+\dots+\frac{w_m}{y_m}y_m = w_a+\dots +w_m.
\]
On the right side, the coefficient of $\sigma^*n$ can be estimated as
follows, using that $w_a/y_a\ge \sigma^*$ and $w_m/y_m\le 1$:
\begin{align*}
\frac{w_a}{y_a}\,\frac{y_a}{w_a}& + \sum_{i=a+1}^m
\Bigl(\frac{w_i}{y_i}-\frac{w_{i-1}}{y_{i-1}}\Bigr)  \frac{y_i}{w_i}
\le 1+ \sum_{i=a+1}^m \int\limits_{w_{i-1}/y_{i-1}}^{w_i/y_i} \frac1t\,dt\\
&=1+ \int\limits_{w_a/y_a}^{w_m/y_m} \frac1t\,dt\le 1+
\int\limits_{\sigma^*}^1 \frac1t\,dt = 1+\ln \frac1{\sigma^*}.
\end{align*}
Thus
\[
w_1+\dots+w_m \le \sigma^*(y_1+\dots+y_{a-1}) + \Bigl(1+\ln \frac1{\sigma^*}\Bigr)\sigma^*n
\le \Bigl(2+\ln \frac1{\sigma^*}\Bigr)\sigma^*n.
\]
Dividing by $n$, we get the upper bound in the theorem.
\end{proof}

\begin{remark}\label{REM:RAND}
A simpler but non-algorithmic proof of Theorem \ref{THM:HYPERGR} can
be sketched as follows. Let again $x$ be the minimizer in the
definition of $\sigma^*$, let $t>0$, and form a family $\FF$ of edges
by selecting $A\in\HH$ with probability $p_A=\min(1,tx_A)$, and
adding all singletons that have not been covered. The probability
that a node $v$ is not covered by the randomly selected edges is at
most $e^{-t}$. This is trivial if $p_A=1$ for any of the hyperedges
containing $v$, and else it follows by the estimate
\[
\prod_{A\ni v} (1-p_A)\le \exp\Bigl(-\sum_{A\ni v} p_A\Bigr) =
\exp\Bigl(-t\sum_{A\ni v} x_A\Bigr) \le e^{-t}.
\]
So the expected number of edges on the boundaries of sets in $\FF$ is
at most
\[
\sum_{A\in\HH} t x_A|\partial A| + e^{-t}n = tn\sigma^*+e^{-t}n.
\]
This bound is minimized when $t=-\ln \sigma^*$, giving a very little
better bound than in the theorem.

The first proof we gave has two advantages: first, it is algorithmic,
and second (perhaps more importantly from our point of view), it
generalizes to the case of graphings.
\end{remark}

\begin{remark}\label{REM:ALL-1}
If all weights are $1$, and all edges $A\in\HH$ have cardinality
$|A|\le k$, then it is easy to see that $\sigma^*\ge 1/k$, and hence
\[
\sigma \le \sigma^*\Bigl(2+ \ln\frac1{\sigma^*}\Bigr) \le \sigma^*\Bigl(2+ \ln k\Bigr).
\]
With some care, we could reduce the first term from $2$ to $1$. This
is a well known inequality \cite{LL1}.
\end{remark}

\begin{remark}\label{REM:K-TRIV}
If we only know that $|A|\le k$ for all $A\in\HH$, but make no
assumption about the weights, we can still prove the (rather trivial)
inequality
\begin{equation}\label{EQ:SKS}
\sigma \le k\sigma^*.
\end{equation}
Indeed, for every node $v$, let $Z_v\in\HH$ be an edge containing $v$
with minimum weight. Then $\{Z_v:~v\in V\}$ is an edge-cover, and
hence
\[
\sigma\le \sum_v w(Z_v).
\]
On the other hand, let $(x_A:~A\in\HH)$ be an optimal fractional
edge-cover, then $\sum_{A\ni v} x_A\ge 1$, and hence
\begin{align*}
\sum_v w(Z_v) &\le \sum_v w(Z_v)\sum_{A\ni v}x_A = \sum_A x_A\sum_{v\in A} w(Z_v)\\
&\le\sum_A x_A\sum_{v\in A} w(A) = \sum_A x_A\,|A|\,w(A) \le k\sum_A x_A\,w(A) =k\sigma^*.
\end{align*}
\end{remark}

\medskip

\noindent{\bf Proof of Theorem \ref{THM:SEP}.} We consider the
hypergraph $(V,\RR_k(G))$ of connected subgraphs of size at most $k$,
with edge-weights $w(Y)=|\partial Y|/D$. We claim that
\begin{equation}\label{EQ:SEP-SIG}
\sep_k(G)= \frac{D}{2} \sigma(\HH,w).
\end{equation}
Indeed, for any family $\FF\subseteq\RR_k$, we have
\[
\frac12 \sum_{A\in\FF} |\partial A| = \frac{D}{2n} \sum_{A\in\FF} w(A),
\]
and the quantities on both sides of \eqref{EQ:SEP-SIG} are the minima
of the two sides of this last equation. The inequality
\begin{equation}\label{EQ:SIG-SEP-ST}
\sep_k^*(G) \ge \sep_k^{**}(G) = \frac{D}{2} \sigma^*(\HH,w)
\end{equation}
follows similarly. Using \eqref{EQ:SEP-SIG} and
\eqref{EQ:SIG-SEP-ST}, we can apply Theorem \ref{THM:HYPERGR}:
\[
\sep_k(G) = \frac{D}{2}\sigma \le
\frac{D}{2}\sigma^*\Bigl(2+ \ln\frac1{\sigma^*}\Bigr)
\le \sep_k^*(G)\Bigl(2+\ln\frac{D}{2\,\sep_k^*(G)}\Bigr).
\]

\subsection{Linear programming duality}

To motivate our results about duality for separation in graphings,
let us describe the (very standard) formulation of the dual of the
fractional $k$-separation problem.

The definition of $\sep_k^{**}(G)$ can be considered as a linear
program, with a variable $x_A$ for each set $A\in\RR_k$:
\begin{align}\label{EQ:SPLIT-D}
\text{minimize~~} &\quad \sum_{A\in\RR_k} \frac{|\partial A|}{2n} x_A,\nonumber\\
\text{subject to~}&\quad x_A\ge 0,\nonumber\\
                  &\quad\sum_{A\ni v} x_A \ge 1 \quad(\forall v\in V).
\end{align}
If $k$ is bounded, then this program has polynomial size, so together
with \eqref{EQ:SIGMA-TRIV} and Theorem \ref{THM:SEP}, we get a
polynomial time approximation algorithm for $\sep_k(G)$, with a
multiplicative error of $O(\ln(1/\sep^*_k(G)))$.

The dual program has a variable $y_v$ for each node $v$, and has the
following form (after some scaling):
\begin{align}\label{EQ:SPLIT-D2}
\text{maximize~~} &\quad \frac1{2n}\sum_{v\in V} y_v,\nonumber\\
\text{subject to~}&\quad y_v\ge 0,\nonumber\\
                  &\quad\sum_{v\in A} y_v \le |\partial A|
                  \quad(\forall A\in \RR_k).
\end{align}
This will give us a hint how to formulate ``dual'' in the graphing
setting.

\section{Fractional partitions in graphings}\label{FRAC-HYP}

\subsection{The space of connected subgraphs}

We consider a graphing $\Gb=(I,\AA,\lambda,E)$, and fix an integer
$k\ge1$. Just as for graphs, $\RR_k=\RR_k(\Gb)$ is the set of subsets
of $I$ inducing a connected subgraph with at most $k$ nodes. Every
singleton set belongs to $\RR_k$. Note that $\RR_k$ can be thought of
as a subset of $I \cup I^2\cup\dots\cup I^k$; it is easy to see that
this is a Borel subset. We can introduce a metric $d_k$ on $I\cup
I^2\cup\dots\cup I^k$: we use the $\ell_\infty$ metric on each $I^r$,
and distance $1$ between points in different sets $I^r$. It is clear
that $\RR_k$ is a closed subset of $I\cup I^2\cup\dots\cup I^k$, and
so it is a compact space.

Let $\tau$ be a finite measure on $\RR_k$. We define the {\it
marginal} of $\tau$ by
\[
\tau'(X) = \int\limits_{\RR^k} \frac{|X\cap Y|}{|Y|}\,d\tau(Y)\qquad (X\in\AA).
\]
In the case when $\tau$ is a probability measure, the probability
distribution $\tau'$ could be generated by selecting a set
$\Yb\in\RR_k$ according to $\tau$, and then a point $\yb\in\Yb$
uniformly. For every function $g:~I \to\R$ and every finite set
$Y\subseteq I $, define
\[
\overline{g}(Y)=\frac1{|Y|}\sum_{y\in Y} g(y).
\]
Then for every finite measure $\tau$ on $\RR_k$, and every integrable
function $g:~I \to\R$, we have
\begin{equation}\label{EQ:T-TP}
\int\limits_{\RR_k} \overline{g}\,d\tau = \int\limits_I g\,d\tau'.
\end{equation}
It is easy to see that this equation characterizes $\tau'$.

Mainly for technical purposes, we define a specific probability
distribution $\mu$ on the Borel sets of $\RR_k$ by selecting a random
point $x\in I$ according to $\lambda$, and then a subset $Y\ni x$
inducing a connected subgraph, uniformly among all such subsets.
Since there are only a finite number of such subgraphs, and at least
one, this makes sense. If $\RR_k(x)$ denotes the set of sets
$Y\in\RR_k$ containing $x$, then
\[
\mu(S)=\int\limits_I\frac{|S\cap\RR_k(x)|}{|\RR_k(x)|}.
\]
An obvious but important property of $\mu$ is that if $\mu(S)=0$ for
some $S\subseteq\RR_k$, then $\lambda(\cup S)=0$.

For a Borel subsets $T,U\subseteq\RR_k$, define
\begin{equation}\label{EQ:PHIDEF}
\Phi_T(x)= |T\cap\RR_k(x)|\qquad\text{and}\qquad
\mu_T(Z)=\mu(T\cap U).
\end{equation}
We need the following identity:
\begin{equation}\label{EQ:MU-PRIME}
\frac{d\mu_T'}{d\lambda}(x)= \Phi_T(x).
\end{equation}
(The left side is defined for almost all $x\in I$ only.) To prove
this, let $X$ be a Borel subset of $I$, and define
\[
f(x,y)=
  \begin{cases}
    \displaystyle \sum_{Y:\,x,y\in Y\in T} \frac1{|Y|} & \text{if $x\in X$}, \\
    0, & \text{otherwise}.
  \end{cases}
\]
Clearly $f(x,y)\not= 0$ implies that $x$ and $y$ belong to the same
component of $\Gb$ and their distance is at most $k-1$. Thus the sums
in the following equation are finite, and we can apply the Mass
Transport Principle (see e.g.\ Theorem 18.49 in \cite{Hombook}):
\[
\int\limits_I\sum_y f(x,y)\,d\lambda(x) = \int\limits_I \sum_x f(x,y)\,d\lambda(y).
\]
On the left, we get
\[
\int\limits_I\sum_y f(x,y)\,d\lambda(x) =
\int\limits_X \sum_y \sum_{Y:\,x,y\in Y\in T} \frac1{|Y|} \,d\lambda(x)
= \int\limits_X \sum_{Y:\,x\in Y\in T} \frac{|Y|}{|Y|} \,d\lambda(x)
=\int\limits_X \Phi_T\,d\lambda,
\]
while on the right,
\[
\int\limits_I \sum_x f(x,y)\,dy = \int\limits_I \sum_{x\in X}
\sum_{Y:\,x,y\in Y\in T} \frac1{|Y|} \,dy
=\int\limits_I \sum_{Y:\,y\in Y\in T} \frac{|X\cap Y|}{|Y|} \,dy
= \mu'_T(X).
\]
This proves \eqref{EQ:MU-PRIME}.

Let $g:~I\to\R$ be a bounded Borel function and define the integral
measure
\[
\lambda_g (X)=\int\limits_X g\,d\lambda.
\]
We need the identity
\begin{equation}\label{EQ:MU-G}
\int\limits_I\overline{g}\,d\mu_T = \int\limits_I \Phi_T(x)\,d\lambda_g.
\end{equation}
Indeed, using \eqref{EQ:MU-PRIME},
\[
\int\limits_{\RR_k}\overline{g}\,d\mu_T = \int\limits_I g\,d\mu'_T
= \int\limits_I g\,\frac{d\mu'_T}{d\lambda} \,d\lambda
=\int\limits_I g\Phi_T \,d\lambda = \int\limits_I \Phi_T(x)\,d\lambda_g.
\]

\subsection{Separation in graphings}

The definition $\sep_k^*$ can be extended to graphings with some
care. Here the probabilistic version of the definition is easier to
use. We say that $\tau$ is a {\it fractional $\RR_k$-partition}, if
its marginal is the uniform distribution on $I $, and $\tau$ is a
{\it fractional $\RR_k$-cover}, if its marginal majorizes the uniform
distribution.

For a finite set $Y\subseteq V(\Gb)$, we define its {\it edge
expansion} (or {\it isoperimetric number}) by
\begin{equation}\label{I-DEF}
\iso(Y)= \frac{|\partial Y|}{|Y|}
\end{equation}
We define the relaxed $k$-splitting value in terms of the ``expected
expansion'' of a fractional partition $\tau$; more exactly,
\begin{equation}\label{EQ:EXPEXP}
\sep^*_k(\Gb) = \frac{1}{2} \inf_\tau\, \E\iso(\Yb) =
\frac{1}{2} \inf_\tau \int\limits_{\RR_k} \iso(Y)\,d\tau(Y),
\end{equation}
where $\tau$ ranges over all fractional $\RR_k$-partitions, and $\Yb$
is a random subset from the distribution $\tau$.

Just as in the finite case, we would get the same value if we allowed
$\RR_k$-covers instead of $\RR_k$-partitions.

The main theorem relating $k$-splitting numbers and their fractional
versions extends to graphings:

\begin{theorem}\label{THM:SEP-INF}
For every graphing $\Gb$ with maximum degree $D$,
\[
\sep_k^*(\Gb) \le \sep_k(\Gb)\le
\sep_k^*(\Gb)\log\frac{8D}{\sep_k^*(\Gb)}.
\]
\end{theorem}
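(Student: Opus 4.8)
The plan is to transfer the greedy-algorithm argument from the finite case (Theorem~\ref{THM:HYPERGR} and its application in the proof of Theorem~\ref{THM:SEP}) to the graphing setting, using the measure-theoretic machinery developed in Section~\ref{FRAC-HYP} (the measure $\mu$, its marginal, and the identities \eqref{EQ:MU-PRIME}, \eqref{EQ:MU-G}) as the substitute for counting arguments. The lower bound $\sep_k^*(\Gb)\le\sep_k(\Gb)$ should be immediate: a genuine $k$-splitting Borel set $T$ produces a fractional $\RR_k$-partition by taking $\tau$ to be (a normalization of) the measure on $\RR_k$ assigning to each component $C$ of $\Gb\setminus T$ its $\lambda$-measure, and $\E\iso(\Yb)$ for this $\tau$ equals $2\eta(T)$ up to the usual factor, as in the finite identities \eqref{EQ:SEP-SIG}--\eqref{EQ:SIG-SEP-ST}. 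So the content is the upper bound.

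For the upper bound, I would run a ``continuous greedy'' in rounds indexed by a threshold parameter $t$ that decreases from $1$ down to roughly $\sep_k^*(\Gb)$, mirroring the role of the ratios $w_i/y_i$ in the finite proof. At each stage we have covered a Borel set $U\subseteq I$; among Borel subsets $S\subseteq\RR_k$ we look for one whose sets have expansion at most the current threshold and which covers a new set of positive measure, and we add the corresponding (partial) edge boundaries to our $k$-splitting set $T$. The key accounting inequality is the analogue of \eqref{EQ:MAIN}: if $U$ is the covered set at threshold level $s$, then $\lambda(I\setminus U)\le (1/s)\,\sigma^*(\Gb)$-type bound, proved by testing against the optimal fractional cover $\tau^*$ and using that every uncovered point has total $\tau^*$-weight at least $1$ while every set $Y$ still available has $\iso(Y)\ge s$ — this is exactly the chain of inequalities following \eqref{EQ:MAIN}, now with sums replaced by integrals against $\tau^*$ and the identity \eqref{EQ:T-TP} doing the bookkeeping. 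Integrating $s\,d(\text{measure covered})$ against $1/s$ over $s\in[\sep_k^*(\Gb),1]$ reproduces the $\log(1/\sep_k^*)$ factor; the constant $8$ (versus $2D$) presumably absorbs the normalization $\iso(Y)=|\partial Y|/|Y|$ versus $|\partial Y|/D$ and the factor $2$, so I would not fuss over the exact constant and would aim for the clean form $\sep_k^*\log(8D/\sep_k^*)$ at the end.

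The main obstacle will be measurability and the existence of the greedy steps: at each threshold there may be no single ``best'' Borel family $S$, and even the infimum of expansions over available sets need not be attained, so one must argue that for any $\delta>0$ there is a Borel $S\subseteq\RR_k$, with $\iso\le s+\delta$ on $S$ and $\lambda(\cup S\setminus U)$ nearly maximal, chosen measurably. I would handle this with an exhaustion argument: using the measure $\mu$ (whose crucial property is that $\mu(S)=0\implies\lambda(\cup S)=0$), one shows that the ``layer'' $L_s=\{Y\in\RR_k:\iso(Y)<s\}$ is Borel and that $\cup L_s$ covers $\lambda$-almost the maximal possible set at threshold $s$; then one takes, for a fine discretization $1=s_0>s_1>\dots>s_N\approx\sep_k^*$, Borel selections of components covering the newly reachable points, and lets the mesh go to $0$. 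Checking that the resulting $T=\bigcup_j\partial(\text{selected sets at level }s_j)$ is Borel with $\eta(T)$ controlled by the Riemann sum $\sum_j s_j\,\Delta\lambda_j$, and that $\Gb\setminus T$ has components of size $\le k$, is the technical heart; here the edge-measure identities and \eqref{EQ:MU-G} let one pass from the combinatorial count $\sum|\partial Y|$ to $\eta(T)$. A further subtlety, as in the finite proof, is the need to trim overlaps so the selected sets at a given level are disjoint — the same ``replace $A_1$ by $A_1\setminus A_2$'' move used after \eqref{EQ:SEP-COV} should work pointwise, but made measurable via a Borel linear order on $\RR_k$. Once these measurability points are secured, the inequalities are exactly the continuous analogues of \eqref{EQ:OPT}--\eqref{EQ:MAIN} and the integral estimate $1+\int_{\sigma^*}^1 t^{-1}\,dt$, so the rest is routine.
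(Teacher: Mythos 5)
You should first note that the paper itself does not prove Theorem \ref{THM:SEP-INF} in full: it only sketches the strategy (the finite greedy reorganized into \emph{finitely many} phases, in each of which an uncountable family of pairwise \emph{disjoint} approximate minimizers is selected simultaneously in a Borel way, with a binary logarithm and the constant $8D$ as the price) and defers the details to Lemma 21.10 of \cite{Hombook}. Your outline follows the same strategy in spirit, and your easy direction is essentially right, but two of your concrete devices fail at exactly the points that constitute the real content. The first is the accounting. Your layers $L_s=\{Y\in\RR_k:\iso(Y)<s\}$ are defined by the absolute expansion, whereas the finite inequalities \eqref{EQ:OPT}--\eqref{EQ:MAIN} are driven by the ratio of boundary to \emph{newly} covered nodes, $w(Y)/|Y\setminus(Y_1\cup\dots\cup Y_{i-1})|$. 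If at threshold $s$ you add the boundaries of sets from $L_s$, a selected $Y$ may lie almost entirely inside the already covered set $U$: you then pay roughly $s|Y|$ in boundary measure while gaining only $|Y\setminus U|$ of new measure, and the Riemann sum $\sum_j s_j\,\Delta\lambda_j$ no longer bounds $\eta(T)$. (Disjointness of the selected sets is not needed for the splitting property itself --- as after \eqref{EQ:SEP-COV}, any component of $\Gb\setminus T$ meeting a selected $Y$ lies inside $Y$ --- it is needed precisely for this cost estimate.) One can salvage measurability cheaply by noting that every point lies in at most $M(D,k)$ members of $\RR_k$, so any Borel family has multiplicity at most $M(D,k)$; but that puts a $k$-dependent factor into the bound, destroying the one feature the theorem exists to provide. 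So you must either run the greedy on the relative ratio $|\partial Y|/|Y\setminus U|$ (and then redo your test against $\tau^*$ accordingly) or guarantee that the sets chosen in a phase are pairwise disjoint and essentially disjoint from $U$ --- which is exactly what the paper means by selecting ``disjoint approximate minimizers'' simultaneously.

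The second gap is the Borel selection itself, which you correctly call the technical heart but do not actually supply. Your countable exhaustion yields covering, not disjointness; and ``a Borel linear order on $\RR_k$'' cannot drive the greedy or the trimming recursion: there is no Borel well-ordering of an uncountable standard Borel space, and along a mere linear order the rule ``select $Y$ unless an earlier conflicting set was already selected'' is not even well-defined on the (possibly infinite) components of the intersection graph; moreover, trimming only reassigns points and does not reduce the boundary cost $\sum|\partial Y|$ you have already committed to $T$. The tool that is actually needed is Borel combinatorics for bounded-degree Borel graphs: the intersection graph on $\RR_k$ has bounded degree, hence admits a Borel proper coloring with finitely many colors (Kechris--Solecki--Todorcevic), and a Borel maximal disjoint subfamily can then be extracted in boundedly many Borel steps. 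This is what the paper alludes to with ``some results from the theory of Borel graphs,'' and it is also why the real proof runs through finitely many phases with doubling thresholds --- whence the binary logarithm and the $8$ in $8D$ --- rather than a mesh-to-zero Riemann sum reproducing $\int_{\sigma^*}^1 t^{-1}\,dt$. Without these two repairs your outline is not yet a proof; with them it becomes, in substance, the argument of Lemma 21.10 in \cite{Hombook} that the paper cites.
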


As an immediate corollary, we see that a graphing $\Gb$ is
hyperfinite if and only if $\sep^*_k(\Gb)\to0$ as $k\to\infty$.

The main difficulty in extending the previous proof to graphings is
that the greedy selection of the sets $Y_j$, as described in the
previous section, would last through an uncountable number of steps,
thus messing up all measurability conditions. Instead, we do the
selection in phases, where a (typically) uncountable number of
disjoint approximate minimizers are selected simultaneously. It turns
out that one can stop after a finite number of such phases. The prize
we have to pay is a small loss in the constant, as the logarithm here
is binary. (This construction uses some results from the theory of
Borel graphs.) We refer to \cite{Hombook} for details.

The following theorem shows that $\sep_k^*$ behaves better that
$\sep_k$ with respect to local equivalence.

\begin{theorem}\label{THM:SEP-STAR}
Let $\Gb_1$ and $\Gb_2$ be locally equivalent graphings. Then for
every $k\ge 1$, $\sep_k^*(\Gb_1)=\sep_k^*(\Gb_2)$.
\end{theorem}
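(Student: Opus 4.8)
The plan is to reduce the equality $\sep_k^*(\Gb_1)=\sep_k^*(\Gb_2)$ to the case where one graphing is obtained from the other by a local isomorphism, using Proposition \ref{PROP:LOC-EQ}. By that proposition there is a graphing $\Gb$ with local isomorphisms $\phi_i:~\Gb\to\Gb_i$, so it suffices to prove that whenever $\phi:~\Gb\to\Gb'$ is a local isomorphism, then $\sep_k^*(\Gb)=\sep_k^*(\Gb')$. The content is then an inequality in each direction: a fractional $\RR_k$-partition of $\Gb'$ can be "pulled back" along $\phi$ to one of $\Gb$ with the same expected expansion, and conversely a fractional $\RR_k$-partition of $\Gb$ can be "pushed forward" along $\phi$ to one of $\Gb'$ with the same expected expansion. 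The point of working with the fractional (measure) version rather than with $\sep_k$ itself is precisely that measures \emph{can} be pushed forward, as emphasised in the introduction; this is what makes the push-forward direction go through here even though it failed for $\sep_k$.

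For the pull-back direction: given a fractional $\RR_k$-partition $\tau'$ of $\Gb'$, note that since $\phi$ is a local isomorphism it induces, component by component, a bijection between $\RR_k(\Gb)$-sets and $\RR_k(\Gb')$-sets in a measure-preserving way (a set $Y\in\RR_k(\Gb)$ maps bijectively onto $\phi(Y)\in\RR_k(\Gb')$, with $|\phi(Y)|=|Y|$ and $|\partial\phi(Y)|=|\partial Y|$, because $\phi$ restricted to the component is a graph isomorphism). Because $\phi$ is measure preserving, this map $\RR_k(\Gb)\to\RR_k(\Gb')$ is measure preserving with respect to the natural measures, so we may define $\tau$ as the pull-back of $\tau'$. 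One checks using \eqref{EQ:T-TP} that the marginal of $\tau$ is again uniform on $I$ (the marginal operation commutes with $\phi$, using that $\phi$ is measure preserving and $|\phi(Y)|=|Y|$), hence $\tau$ is a fractional $\RR_k$-partition of $\Gb$; and $\int\iso(Y)\,d\tau(Y)=\int\iso(Y')\,d\tau'(Y')$ since $\iso$ is invariant under $\phi$. This gives $\sep_k^*(\Gb)\le\sep_k^*(\Gb')$.

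For the push-forward direction: given a fractional $\RR_k$-partition $\tau$ of $\Gb$, push it forward to the measure $\tau'=\tau^{\phi}$ on $\RR_k(\Gb')$ defined by $\tau'(S)=\tau\bigl(\{Y:\phi(Y)\in S\}\bigr)$; this is a well-defined finite Borel measure precisely because, unlike sets, measures transport freely under any measurable map. Its expected expansion equals that of $\tau$ because $\iso(\phi(Y))=\iso(Y)$. The one substantive point is that the marginal of $\tau'$ is uniform on $V(\Gb')$: here one uses that $\phi$ is measure preserving together with the fact that $\phi$ maps the component of $x$ isomorphically onto the component of $\phi(x)$, so that fibres $\phi^{-1}(x')$ are "uniformly spread" in the relevant sense --- concretely, for a Borel $X'\subseteq I'$ one computes $(\tau')'(X')$ by unfolding the definition of push-forward and of marginal, substituting $|X'\cap\phi(Y)|=|\phi^{-1}(X')\cap Y|$ (valid since $\phi|_Y$ is injective), and applying the marginal identity for $\tau$ on $\Gb$ followed by $\phi$-invariance of $\lambda$. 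This yields $\sep_k^*(\Gb')\le\sep_k^*(\Gb)$, and combining the two inequalities finishes the proof.

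The main obstacle I anticipate is the measurability bookkeeping in the push-forward direction: one must verify that $Y\mapsto\phi(Y)$ is a Borel map $\RR_k(\Gb)\to\RR_k(\Gb')$ (so that $\tau^\phi$ is genuinely a Borel measure on $\RR_k(\Gb')$), and that the marginal computation is legitimate --- this is where the hypothesis that $\phi$ is a \emph{local} isomorphism (an isomorphism on each component, not merely locally injective) is essential, since it is what guarantees $|\phi(Y)|=|Y|$ and the clean fibrewise counting. Everything else (uniform marginal, equality of expected expansions, passing from $\RR_k$-partitions to $\RR_k$-covers) is a routine unwinding of the definitions in Section \ref{FRAC-HYP} together with \eqref{EQ:T-TP}.
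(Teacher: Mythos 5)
Your push-forward half is fine and is exactly the paper's ``trivial half'': $\wh\phi$ is Borel, the pushed-forward measure has uniform marginal because $|X'\cap\phi(Y)|=|\phi^{-1}(X')\cap Y|$ and $\phi$ is measure preserving, and $\iso$ is $\phi$-invariant, giving $\sep_k^*(\Gb_2)\le\sep_k^*(\Gb_1)$. The gap is in your ``pull-back direction'', which is precisely the nontrivial half. Your key claim --- that a local isomorphism ``induces, component by component, a bijection between $\RR_k(\Gb)$-sets and $\RR_k(\Gb')$-sets in a measure-preserving way,'' so that one ``may define $\tau$ as the pull-back of $\tau'$'' --- is false: as the paper stresses right before Proposition \ref{PROP:LOC-EQ}, a local isomorphism need not be injective and may map many components of $\Gb$ onto one component of $\Gb'$, so $\wh\phi$ is in general badly non-injective and there is no canonical pull-back of a measure along it. If you try to repair this by lifting, i.e.\ choosing for each $Y'\in\RR_k(\Gb')$ a distribution over its preimage sets (say via a disintegration of $\lambda$ over $\phi$), the expected expansion is preserved, but the crux --- that the marginal of the lifted measure is exactly $\lambda$ --- fails in general, because the fibre measures of the disintegration need not be compatible with moving along edges of the graphing. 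This marginal-control problem is exactly the ``push a set forward / distill a set from a measure'' difficulty the paper is organized around, and it cannot be waved away by ``$\phi$ is measure preserving''; note also that without this direction, Proposition \ref{PROP:LOC-EQ} only yields $\sep_k^*(\Gb_i)\le\sep_k^*(\Gb)$ for $i=1,2$, which does not give the claimed equality.

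The paper's route around this obstacle is entirely different and is where the real work lies: it proves an LP-duality formula for $\sep_k^*$ (Theorem \ref{THM:DUAL}, which needs the compactification construction of Theorem \ref{PROP:COMP}, Hahn--Banach and Riesz representation), so that $\sep_k^*(\Gb_1)$ is certified from below by a bounded Borel function $g_1$ with $\overline{g}_1\le\iso$. Such a dual certificate \emph{can} be transported along $\phi$: one pushes forward the measure $\lambda_{g_1}$ and takes its Radon--Nikodym derivative with respect to $\lambda_2$ to get $g$ on $\Gb_2$; the feasibility $\overline{g}\le\iso$ is then verified only almost everywhere, via the mass-transport identities \eqref{EQ:MU-G} and \eqref{EQ:PHI-INVAR}, and $g$ is modified on an exceptional set. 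Your proposal contains no substitute for this duality step, so as written it does not prove the inequality $\sep_k^*(\Gb_2)\ge\sep_k^*(\Gb_1)$.
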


Let us describe again the trivial half of the proof. The nontrivial
half will be given in Section \ref{SEC:APPDUAL}. It will be based
expressing $\sep^*_k(\Gb)$ in a dual form, in terms of the existence
of a function on $V(\Gb)$, and on the fact that (with some care)
functions can be pushed in both directions along a measure preserving
map.

By Proposition \ref{PROP:LOC-EQ}, it suffices to prove it in the case when
there is a local isomorphism $\phi:~\Gb_1\to\Gb_2$. To start with the easy
direction: there is a fractional $\RR_k(\Gb_1)$-partition $\tau_1$ such that
for a random set $Y\in\RR_k(\Gb_1)$ from this distribution, $\E(|\partial
Y|/|Y|)\le\sep_k(\Gb_1)$.

Since $\phi(Y)\in\RR_k(\Gb_2)$ for every $Y\in\RR_k(\Gb_1)$ by the
definition of local isomorphism, $\phi$ defines a map $\wh\phi:~
\RR_k(\Gb)\to\RR_k(\Gb_2)$. It is easy to see that this is a
measurable map, and hence it pushes forward the measure $\tau_1$ to a
probability measure $\tau_2$ on $\RR_k(\Gb_2)$. Furthermore, $\tau_2$
is a fractional partition (this follows since $\phi_2$ is measure
preserving), and
\[
\int\limits_{\RR_k(\Gb_2)}\iso(Y)\,d\tau_2(Y) =
\int\limits_{\RR_k(\Gb_1)}\iso(Y)\,d\tau_1(Y).
\]
Thus $\sep_k^*(\Gb_2) \le \sep_k^*(\Gb_1)$.

The previous two theorems imply a quantitative version of Theorem
\ref{THM:HYP-W-ISO}:

\begin{corollary}\label{COR:HYP-W-ISO}
Let $\Gb_1$ and $\Gb_2$ be locally equivalent graphings. Then
\[
\sep_k(\Gb_2)\le \sep_k(\Gb_1)\log\frac{8D}{\sep_k(\Gb_1)}.
\]
\end{corollary}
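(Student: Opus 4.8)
The plan is to deduce this by simply chaining Theorems~\ref{THM:SEP-INF} and~\ref{THM:SEP-STAR}; no new idea is needed. First I would apply the upper bound of Theorem~\ref{THM:SEP-INF} to $\Gb_2$, getting $\sep_k(\Gb_2)\le\sep_k^*(\Gb_2)\log(8D/\sep_k^*(\Gb_2))$. Next, Theorem~\ref{THM:SEP-STAR} lets me replace $\sep_k^*(\Gb_2)$ by $\sep_k^*(\Gb_1)$, and the left inequality of Theorem~\ref{THM:SEP-INF}, applied this time to $\Gb_1$, gives $\sep_k^*(\Gb_1)\le\sep_k(\Gb_1)$. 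Substituting,
\[
\sep_k(\Gb_2)\le \sep_k^*(\Gb_1)\log\frac{8D}{\sep_k^*(\Gb_1)}.
\]

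To finish I would replace $\sep_k^*(\Gb_1)$ by $\sep_k(\Gb_1)$ on the right-hand side, which is legitimate as soon as $f(t):=t\log(8D/t)$ (binary logarithm, as in Theorem~\ref{THM:SEP-INF}) is nondecreasing on an interval containing both values. Since $T=E(\Gb_1)$ is always a $k$-splitting Borel set, $\sep_k(\Gb_1)\le\eta_1(E(\Gb_1))=\int_I\deg(x)\,d\lambda(x)\le D$, so both $\sep_k^*(\Gb_1)$ and $\sep_k(\Gb_1)$ lie in $(0,D]$. A one-line computation gives $f'(t)=\log(8D/t)-1/\ln 2$, which is positive exactly when $t<8D\,2^{-1/\ln 2}=8D/e$; as $D<8D/e$, the function $f$ is increasing on $(0,D]$, and the substitution is justified, yielding $\sep_k(\Gb_2)\le\sep_k(\Gb_1)\log(8D/\sep_k(\Gb_1))$.

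There is no real obstacle here: the entire substance sits in Theorems~\ref{THM:SEP-INF} and~\ref{THM:SEP-STAR}, whose proofs are sketched or deferred to \cite{Hombook} elsewhere in the paper. The only point deserving a word is the degenerate case $\sep_k^*(\Gb_1)=0$ (equivalently $\sep_k(\Gb_1)=0$, by the left inequality of Theorem~\ref{THM:SEP-INF}): then Theorem~\ref{THM:SEP-STAR} gives $\sep_k^*(\Gb_2)=0$, and the upper bound of Theorem~\ref{THM:SEP-INF} applied to $\Gb_2$ forces $\sep_k(\Gb_2)=0$ as well (reading $t\log(8D/t)$ at $t=0$ as its limit $0$), so the claimed inequality holds trivially. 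In all other cases the two displays above combine directly.
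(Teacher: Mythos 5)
Your argument is exactly the paper's: chain the upper bound of Theorem~\ref{THM:SEP-INF} for $\Gb_2$, the equality of Theorem~\ref{THM:SEP-STAR}, the trivial inequality $\sep_k^*(\Gb_1)\le\sep_k(\Gb_1)$, and the monotonicity of $t\mapsto t\log(8D/t)$ on the relevant range. Your extra checks (monotonicity up to $8D/e$, hence on $(0,D]$, and the degenerate case $\sep_k^*(\Gb_1)=0$) are correct refinements of the paper's one-line parenthetical remark, so the proposal is correct and essentially identical in approach.
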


Indeed, we have $\sep^*_k(\Gb_2)=\sep^*_k(\Gb_1)\le \sep^*_k(\Gb_2)$,
and so by Theorem \ref{THM:SEP-INF},
\[
\sep_k(\Gb_2)\le \sep^*_k(\Gb_2)\log\frac{8D}{\sep_k^*(\Gb_2)} \le
\sep_k(\Gb_1)\log\frac{8D}{\sep_k(\Gb_1)}
\]
(using that the function $x\log(8D/x)$ is monotone increasing for
$x\le 1$).

Note that we have applied the nontrivial half of Theorem
\ref{THM:SEP-INF}, but (with some care) only the trivial half of
Theorem \ref{THM:SEP-STAR}.

\begin{remark}\label{REM:KSEP}
The inequality \eqref{EQ:SKS}, along with its proof, generalizes to
graphings with no difficulty: For every graphing $\Gb$,
\[
\sep_k(\Gb)\le k\sep_k^*(\Gb).
\]
However, this inequality would not be good enough to prove Theorem
\ref{THM:HYP-W-ISO}. Going through the proof, one could realize the
significance that the upper bound in Theorem \ref{THM:SEP-INF} does
not depend on $k$: we need the fact that if $\sep_k(\Gb_1)\to 0$ as
$k\to\infty$, then $\sep_k(\Gb_1)\log\frac{8D}{\sep_k(\Gb_1)}\to 0$,
and hence $\sep_k(\Gb_2)\to 0$.
\end{remark}

\section{Duality}

Our goal is to generalize the linear programming duality of the separation
problem to the graphing case.

\subsection{Dual formulation for graphings}

Recall that fractional partitions are probability distributions
$\tau$ on $\RR^k$ such that $\tau'=\lambda$. This can be expressed
explicitly by the conditions
\begin{equation}\label{EQ:T-L}
\int\limits_{\RR_k} \overline{g}\,d\tau = \int\limits_0^1 g\,d\lambda
\end{equation}
for all integrable functions $g:~I \to\R_+$. The relaxed separation number
$\sep_k^*$ is defined by
\begin{equation}\label{EQ:SEP-H}
\sep_k^*(\Gb)=\frac12\inf_\tau \int\limits_{\RR_k} \iso \,d\tau.
\end{equation}
A dual characterization (as a supremum instead of an infimum)
generalizing \eqref{EQ:SPLIT-D2}, is given in the following theorem.

\begin{theorem}\label{THM:DUAL}
For every graphing $\Gb=(I,\AA,\lambda,E)$, we have
\[
\sep_k^*(\Gb)=\sup \frac12 \int\limits_I g\,d\lambda,
\]
where $g$ ranges over all bounded Borel functions $g:~I\to \R$ such
that $\overline{g}\le\iso$ on $\RR_k$.
\end{theorem}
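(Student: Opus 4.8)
The plan is to recognize Theorem~\ref{THM:DUAL} as an instance of infinite-dimensional linear programming duality and to exploit the compactification (Theorem~\ref{PROP:COMP}) so that a compactness/separation argument (Hahn--Banach or a minimax theorem) actually applies. First I would reduce to the case that $\Gb$ is a \emph{compact} graphing: by Theorem~\ref{PROP:COMP} we may replace $\Gb$ by a compactification, since passing to a strong subgraphing changes neither $\sep_k^*$ (noted in the excerpt) nor the supremum on the right — a bounded Borel function on $I$ extends to one on $J$ with $\overline g\le\iso$ preserved, because $\RR_k(\Gb)$ and $\RR_k(\wh\Gb)$ coincide on the relevant sets, and conversely restriction works. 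So assume $V(\Gb)=J$ is compact, $\RR_k=\RR_k(\Gb)$ is a compact metric space (as established in Section~\ref{FRAC-HYP}), and it suffices to impose the marginal condition \eqref{EQ:T-L} for continuous $g$.

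The ``easy'' inequality $\ge$ is weak duality: if $\tau$ is any fractional $\RR_k$-partition and $g$ is any bounded Borel function with $\overline g\le\iso$ on $\RR_k$, then by \eqref{EQ:T-TP},
\[
\int_I g\,d\lambda=\int_I g\,d\tau'=\int_{\RR_k}\overline g\,d\tau\le\int_{\RR_k}\iso\,d\tau,
\]
and taking the supremum over $g$ on the left and the infimum over $\tau$ on the right gives $\sup\frac12\int g\,d\lambda\le\sep_k^*(\Gb)$. For the reverse inequality I would set up the LP: the primal variable is a positive measure $\tau$ on the compact space $\RR_k$, the objective is $\frac12\int_{\RR_k}\iso\,d\tau$ (note $\iso$ is continuous, hence bounded, on the compact $\RR_k$), and the constraint $\tau'=\lambda$ is an equality between elements of the dual of $C(J)$. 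The feasible set $\{\tau\ge 0:\tau'=\lambda\}$ is nonempty (take $\tau=\mu$, the measure built in Section~\ref{FRAC-HYP}, whose marginal is $\lambda$ — or rescale an appropriate pushforward), convex, and weak-$*$ closed; moreover it is weak-$*$ compact, because $\tau'=\lambda$ forces a uniform bound on the total mass of $\tau$ (pairing with $g\equiv 1$ and using that every $Y\in\RR_k$ has $|Y|\ge 1$ gives $\tau(\RR_k)\le k$, say, or more simply $\int_{\RR_k}\frac{1}{|Y|}d\tau=\lambda(I)=1$ bounds things appropriately). Hence the infimum defining $\sep_k^*$ is attained by some optimal $\tau_0$. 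Now apply a Hahn--Banach separation / Lagrangian duality argument: the value of this LP equals the value of its dual, whose variable is exactly a continuous (then, by an approximation step, bounded Borel) function $g:J\to\R$ entering through the pairing $\int_I g\,(d\lambda-d\tau')$; imposing the sign-free equality constraint makes $g$ unconstrained in sign, the dual objective is $\frac12\int_I g\,d\lambda$, and the dual feasibility condition coming from $\tau\ge 0$ is precisely $\overline g(Y)=\int g\,|_Y\le\iso(Y)$ for all $Y\in\RR_k$ (using \eqref{EQ:T-TP} to translate the action of $\tau$ on $g$ into $\int\overline g\,d\tau$). Strong duality holds because the primal feasible set is nonempty and the problem is a bounded linear functional over a weak-$*$ compact convex set with a continuous objective — so the Sion minimax theorem (or the general LP duality for $C(K)^*$) applies with no duality gap.

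The main obstacle I expect is the strong-duality step itself: verifying that there is no duality gap in this infinite-dimensional program. The cleanest route is to phrase it as a minimax, $\sep_k^*=\frac12\inf_{\tau}\sup_{g}\big[\int_{\RR_k}\iso\,d\tau+\int_I g\,d\lambda-\int_{\RR_k}\overline g\,d\tau\big]$, where $\tau$ ranges over nonnegative measures on $\RR_k$ of bounded mass (a weak-$*$ compact convex set) and $g$ over, say, $C(J)$ (a convex set), check that the bracket is affine (hence concave) in $g$ and affine (hence convex) and weak-$*$ continuous in $\tau$, and invoke Sion's theorem to swap $\inf$ and $\sup$; after the swap the inner infimum over $\tau$ is $0$ exactly when $\overline g\le\iso$ on $\RR_k$ (else it is $-\infty$, since $\tau$ can be scaled up on the bad set — here I must be slightly careful since $\tau$ is restricted to bounded mass, so instead one argues that for non-dual-feasible $g$ the value is strictly below $\sep_k^*$, or relaxes the mass bound which costs nothing because the $\iso$ term keeps things finite on the support of any near-optimal $\tau$). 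Finally, a routine approximation upgrades the optimal dual $g$ from continuous to bounded Borel (in fact continuity is already enough for the stated supremum, which only widens the class), and passing back from $\wh\Gb$ to $\Gb$ via the strong-subgraphing remarks completes the proof.
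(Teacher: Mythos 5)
Your overall strategy is the same as the paper's: reduce to a compact graphing via Theorem \ref{PROP:COMP}, record the weak-duality inequality exactly as in the paper, use continuity of $\iso$ on the compact space $\RR_k$, and get the reverse inequality from a Hahn--Banach/LP-duality argument in $\CC(\RR_k)$ and its dual of measures (the paper separates the single function $\iso$ from the convex set $K$ of continuous functions dominating some $\overline{g}$ with $\int g\,d\lambda=s$, and converts the separating functional into a measure $\tau$ with $\tau'=\lambda$ by Riesz representation; you phrase the same duality as a Sion minimax with $\tau$ ranging over a weak-$*$ compact set of measures). The difference lies only in how the no-duality-gap step is organized, and that is precisely where your writeup has a genuine, though fixable, hole.

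Concretely: to invoke Sion you must truncate to measures of total mass at most $M$, and then for a $g$ violating $\overline{g}\le\iso$ the inner infimum is not $-\infty$ but the finite penalized value $\frac12\int_I g\,d\lambda-\frac{M}{2}\max_Y(\overline{g}-\iso)_+$; you flag this, but neither of your proposed remedies is carried out, and as stated ("the value is strictly below $\sep_k^*$", "relax the mass bound") they do not yet yield what you need, namely that the supremum over \emph{dual-feasible} $g$ alone is at least $\sep_k^*(\Gb)$. The fix is cheap: since subtracting a constant $c$ from $g$ lowers $\overline{g}$ by the same $c$, replace a near-optimal $g$ of the penalized problem by $g-c$ with $c=\max_Y(\overline{g}-\iso)_+$ (the maximum exists because $\overline{g}-\iso$ is continuous on the compact $\RR_k$); this function is feasible, and because the penalty coefficient is $M/2\ge 1/2$, the loss $c/2$ in the objective is absorbed by the penalty, so already $M=1$ suffices and the swapped value is attained, up to $\eps$, by feasible $g$. (Alternatively, skip the minimax and separate directly as the paper does.) Two smaller corrections: the measure $\mu$ of Section \ref{FRAC-HYP} does not in general have marginal $\lambda$, so it is not a witness of primal feasibility --- use instead the pushforward of $\lambda$ under $x\mapsto\{x\}$, i.e.\ the singleton partition; and the continuity of $\iso$ on $\RR_k$, which you assert only parenthetically, is exactly where the defining property of compact graphings (neighborhood isomorphisms between nearby points) enters --- the paper proves it as Claim \ref{CLAIM:H-CONT}, and your argument needs it both for the weak-$*$ continuity of the objective and for the compactness and attainment claims.
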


Note that the condition on $g$ can be written as $\sum_{y\in Y}
g(y)\le|\partial Y|$ for all $Y\in\RR_k$. The proof will show that if
the graphing is compact, then we can let $g$ range over continuous
functions only.

\begin{proof}
We may assume that the graphing is compact; else, we construct its
compactification as in Proposition \ref{PROP:COMP}, find the appropriate
functions $g$, and restrict them to the original points, where they are still
bounded Borel functions.

Let us denote the supremum on the right side by $\alpha_k$. First, we
show the ``easy direction''. For every measure $\tau$ on $\RR_k$ with
$\tau'=\lambda$, and every Borel function $g$ as in the theorem, we
have
\[
\int\limits_I g\,d\lambda = \int\limits_I g\,d\tau'
=\int\limits_{\RR_k} \overline{g} d\tau
\le \int\limits_{\RR_k} \iso d\tau,
\]
proving that $\sep_k^*(\Gb)\ge \alpha_k$.

To prove the opposite inequality, fix any $s<2\sep_k^*(\Gb)$. Let
$\CC(I)$ be the space of continuous functions on $I$, then for every
$g\in\CC(I)$, we have $\overline{g}\in \CC(\R_k)$. Define
\[
K_0=\Bigl\{g\in\CC(I):~\int\limits_I g\,d\lambda =s\Bigr\}
\quad\text{and}\quad
K=\bigl\{f\in\CC(\RR_k):~\exists g\in K_0,\, \overline{g}\le f\bigr\}.
\]
Both $K_0$ and $K$ are convex, and $K$ has nonempty interior. Both
$K_0$ and $K$ contain the constant function $s$ (defined on $I$ and
$\RR_k$, respectively).

Recalling the metric $d_k$ on $I\cup I^2\cup\dots\cup I^k$, we can
state the following.

\begin{claim}\label{CLAIM:H-CONT}
The function $\iso$ is continuous in the metric $d_k$.
\end{claim}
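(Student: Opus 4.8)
The plan is to show that $\iso$ is not merely continuous but \emph{locally constant} on $\RR_k$ (it assumes only the finitely many values $j/\ell$ with $1\le\ell\le k$, $0\le j\le \ell D$), so that it suffices to prove: whenever $Y_n\to Y$ in $\RR_k$, one has $\iso(Y_n)=\iso(Y)$ for all large $n$. Since the pieces $I^\ell$ sit at $d_k$-distance $1$ from one another, for large $n$ the sets $Y_n$ lie in the same $I^r$ as $Y$; writing $Y=(y_1,\dots,y_r)$ and $Y_n=(y_1^{(n)},\dots,y_r^{(n)})$, convergence in the $\ell_\infty$ metric just says $y_i^{(n)}\to y_i$ for each $i$. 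I first note that $Y$, together with every endpoint of every edge of $\partial Y$, lies in the ball $B(y_1,k)$: $Y$ is connected with at most $k$ nodes and contains $y_1$, so its nodes are within graph-distance $k-1$ of $y_1$, and the far endpoints of $\partial Y$ within graph-distance $k$. Let $z_1,\dots,z_N$ be the (pairwise distinct, finitely many) nodes of $B(y_1,k)$ and set $\gamma=\min_{a\ne b}d(z_a,z_b)>0$.

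Now I would invoke compactness of $\Gb$. Fix $\eps>0$ with $\eps\le 1/(k+1)$ and $\eps<\gamma/2$, and let $\delta=\delta(\Gb,\eps)>0$ be the quantity from the definition of compact graphing; thus $d(x,z)<\delta$ produces a neighborhood isomorphism between $x$ and $z$ of radius $\ge 1/\eps-1\ge k$ with displacement at most $\eps$. Choose $n$ large enough that $d(y_i^{(n)},y_i)<\min(\delta,\gamma/2)$ for all $i$. Applying the definition with $x=y_1$, $z=y_1^{(n)}$ and restricting the resulting isomorphism to the radius-$k$ ball, I obtain a graph isomorphism $\psi\colon B(y_1,k)\to B(y_1^{(n)},k)$ with $\psi(y_1)=y_1^{(n)}$ and $d(w,\psi(w))\le\eps$ for every $w\in B(y_1,k)$; the restriction is \emph{onto} $B(y_1^{(n)},k)$ because a rooted isomorphism of balls preserves graph-distance to the root.

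The core step is to prove $\psi(Y)=Y_n$ as vertex sets. Each $y_i^{(n)}$ lies in $B(y_1^{(n)},k)$ (as $Y_n$ is connected with $\le k$ nodes and contains $y_1^{(n)}$), so $y_i^{(n)}=\psi(z_{a_i})$ for some $a_i$; then $d(z_{a_i},y_i)\le d(z_{a_i},\psi(z_{a_i}))+d(y_i^{(n)},y_i)<\eps+\gamma/2<\gamma$, and since $y_i$ is among the $z_a$ and these are $\gamma$-separated, $z_{a_i}=y_i$, i.e. $y_i^{(n)}=\psi(y_i)$. Hence $Y_n=\psi(Y)$. Because $\psi$ is a graph isomorphism of $B(y_1,k)$ onto $B(y_1^{(n)},k)$, and both balls contain their respective sets $Y$, $Y_n$ together with all endpoints of $\partial Y$, $\partial Y_n$, the map $\psi$ carries $\partial Y$ bijectively onto $\partial Y_n$; therefore $|\partial Y_n|=|\partial Y|$ and $|Y_n|=r=|Y|$, so $\iso(Y_n)=\iso(Y)$, as desired.

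The only genuinely delicate point I expect is the identity $\psi(Y)=Y_n$: the neighborhood isomorphism furnished by compactness matches vertices only \emph{approximately}, and this must be upgraded to an exact match — which is precisely why $\eps$ is chosen below the separation $\gamma$ of the finitely many vertices of $B(y_1,k)$, and why $n$ is taken large enough to bring each $y_i^{(n)}$ to within $\gamma/2$ of $y_i$. The remaining details (radius arithmetic, that a ball-isomorphism restricts to smaller balls, that endpoints of boundary edges lie in $B(y_1,k)$) are routine.
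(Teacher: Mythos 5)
Your argument is correct and is essentially the paper's own proof: fix $Y$, use that the finitely many relevant points (those of $Y\cup\partial Y$, or of $B(y_1,k)$) have positive minimum separation, and invoke compactness of the graphing to get a neighborhood isomorphism of radius at least $k$ with displacement below that separation, which then must carry $Y$ exactly onto the nearby set and its boundary onto the boundary, so $\iso$ is locally constant. Your write-up just makes explicit (via the $\gamma/2$ bookkeeping and the exact-matching step) details the paper compresses into one sentence.
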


Indeed, fix a set $Y\in\RR_k$, and choose $N>k$ so that any two
points in $Y\cup\partial Y$ are at least $1/N$ apart. If
$d_k(Y,Z)<\min(1/N,\delta(N))$, then $|Y|=|Z|$, and the isomorphism
$\phi:~B(y,N)\to B(z,N)$ (where $y\in Y$, $z\in Z$ and $d(y,z)<1/N$)
maps $Y$ onto $Z$ and their neighborhoods onto each other, showing
that $\iso(Z)=\iso(Y)$.

The main step in the proof is the following claim.

\begin{claim}\label{CLAIM:H-K}
$\iso \in K$.
\end{claim}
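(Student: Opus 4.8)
The plan is to prove Claim~\ref{CLAIM:H-K} by contradiction, using the Hahn--Banach separation theorem in the Banach space $\CC(\RR_k)$; this is the infinite-dimensional substitute for the linear programming duality sketched in Section~2.3. Suppose $\iso\notin K$. Since $K$ is convex with nonempty interior and $\iso$ is continuous (Claim~\ref{CLAIM:H-CONT}), $\iso$ is not an interior point of $K$, so there is a nonzero bounded linear functional $\Lambda$ on $\CC(\RR_k)$ and a real number $c$ with $\Lambda(\iso)\le c\le \Lambda(f)$ for all $f\in K$. By the Riesz representation theorem on the compact metric space $\RR_k$, $\Lambda(f)=\int_{\RR_k}f\,d\nu$ for some finite signed Borel measure $\nu$, and the aim is to normalize $\nu$ into a fractional $\RR_k$-partition that beats $\sep_k^*(\Gb)$.

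The first step is to show $\nu\ge 0$. The set $K$ is \emph{upward closed}: if $f\in K$ is witnessed by some $g\in K_0$ with $\overline g\le f$, and $f'\in\CC(\RR_k)$ satisfies $f'\ge f$, then the same $g$ witnesses $f'\in K$. Hence $f_0+t\psi\in K$ for any fixed $f_0\in K$, any nonnegative $\psi\in\CC(\RR_k)$, and any $t\ge 0$; since $\Lambda(f_0+t\psi)=\Lambda(f_0)+t\int_{\RR_k}\psi\,d\nu\ge c$ for all such $t$, we get $\int_{\RR_k}\psi\,d\nu\ge 0$ for every nonnegative continuous $\psi$, which forces $\nu\ge 0$. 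As $\Lambda\ne 0$, $m:=\nu(\RR_k)>0$, so we may replace $\nu$ by the probability measure $\tau:=\nu/m$ (and $c$ by $c/m$), keeping $\Lambda(\iso)\le c\le\Lambda(f)=\int_{\RR_k}f\,d\tau$ for all $f\in K$.

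The second step is to show $\tau$ is a fractional $\RR_k$-partition, i.e.\ $\tau'=\lambda$. For every $g\in K_0$ we have $\overline g\in K$, so $\int_{\RR_k}\overline g\,d\tau\ge c$; by \eqref{EQ:T-TP} this reads $\int_I g\,d\tau'\ge c$ whenever $g\in\CC(I)$ and $\int_I g\,d\lambda=s$. Taking $g\equiv s$ and using $\tau'(I)=\tau(\RR_k)=1$ gives $\int_I g\,d\tau'=s$, hence $c\le s$. Taking $g=s\one+t\bigl(h-(\int_I h\,d\lambda)\one\bigr)$ for an arbitrary $h\in\CC(I)$ and letting $t$ run over all of $\R$ forces $\int_I h\,d\tau'=\int_I h\,d\lambda$; since we are working with the compactification, $I$ is a compact metric space and continuous functions separate finite Borel measures on it, so $\tau'=\lambda$. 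Finally, the constant function $s$ lies in $K$, so $\Lambda(\iso)\le c\le\Lambda(s)=s$, whence $\tfrac12\int_{\RR_k}\iso\,d\tau=\tfrac12\Lambda(\iso)\le\tfrac s2<\sep_k^*(\Gb)$, contradicting the definition \eqref{EQ:SEP-H} of $\sep_k^*(\Gb)$ as an infimum over fractional $\RR_k$-partitions. Therefore $\iso\in K$.

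The main obstacle is making the separation argument legitimate: one must be sure that $K$ really has nonempty interior in the sup-norm topology (which is exactly where it helps that $\overline g$ can dominate a continuous function lying slightly below the constant $s$, so a whole norm-ball around $s$ sits in $K$) and one must correctly extract positivity of the separating measure from the unboundedness of $K$ above. Once these two points are secured, the rest is routine bookkeeping with \eqref{EQ:T-TP} and the definitions of $\sep_k^*$ and of fractional $\RR_k$-partitions.
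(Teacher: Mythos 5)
Your core argument is essentially the paper's own proof: Hahn--Banach separation of $\iso$ from the convex set $K$, positivity of the separating functional via the upward closedness of $K$, normalization to a probability measure $\tau$ via the Riesz representation theorem, the affine-perturbation (bounded-below on a line) argument forcing $\tau'=\lambda$, and the final contradiction with the definition of $\sep_k^*(\Gb)$ as an infimum over fractional $\RR_k$-partitions. Up to reordering (you invoke Riesz before extracting positivity and normalizing, whereas the paper normalizes the functional first), the steps coincide and are correct.

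The one genuine error is the justification you offer for the hypothesis that $K$ has nonempty interior -- precisely the point you single out as the main obstacle. No norm-ball around the constant function $s$ is contained in $K$: every singleton $\{x\}$ lies in $\RR_k$, so $\overline{g}\le f$ forces $g(x)\le f(\{x\})$ for all $x\in I$; hence if $f$ is the constant $s-\delta$ with small $\delta>0$ (which lies in every ball around $s$), any $g$ with $\overline{g}\le f$ would satisfy $\int_I g\,d\lambda\le s-\delta<s$, so no such $g\in K_0$ exists and $f\notin K$. Thus the constant $s$ is a boundary point of $K$, not an interior point. The nonempty-interior claim itself is true, and the repair is one line: take $g\equiv s\in K_0$, so $\overline{g}\equiv s$; then every $f\in\CC(\RR_k)$ with $\|f-(s+1)\|_\infty<1$ satisfies $f>s=\overline{g}$ and hence lies in $K$, i.e.\ the unit ball around the constant $s+1$ is contained in $K$ (more generally, any $f$ strictly dominating some $\overline{g}$ with $g\in K_0$ is an interior point, since $K$ is upward closed). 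With this correction your proof is complete and agrees with the paper's.
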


Suppose not, then by the Hahn--Banach Theorem, there is a nonzero
continuous linear functional $\ell$ on $\CC(\RR_k)$ such that
$\ell(f)\ge\ell(\iso)$ for every $f\in K$. Since $\ell$ remains
bounded from below on $K$, it must be nonnegative on nonnegative
functions. We may normalize $\ell$ so that $\ell(1)=1$. For any $g\in
K_0$, the functional remains bounded from below on the linear variety
of functions of the form $\overline{f}$, where $f= s+t(g-s)$ for some
$t\in\R$. This implies that it must be constant on this variety and
so it must satisfy $\ell(\overline{g})=\ell(s)=s\ell(1)=s$. Thus we
have $\ell(\iso)\le \ell(s)=s$.

By the Riesz Representation Theorem, we can represent $\ell$ by a
measure $\tau$ on $\RR_k$ such that $\ell(f) = \int_{\RR_k} f\,
d\tau$ for every $f\in\CC(\RR_k)$. By the normalization $\ell(1)=1$,
$\tau$ is a probability measure. Furthermore,
\[
\ell(\overline{g}) =\int\limits_{\RR_k} \overline{g}\, d\tau =\int\limits_{I}g\, d\tau'=
s=\int\limits_{I} g\, d\lambda
\]
for every $g\in K_0$, which implies that $\tau'=\lambda$. But then
\[
\ell(\iso)=\int\limits_{\RR_k} \iso\, d\tau \ge 2\sep_k^*(\Gb)>s,
\]
a contradiction.

\smallskip

So $\iso\in K$, and hence there is a nonnegative function $g\in K_0$
such that $\overline{g}\le \iso$. So $2\alpha_k\ge s$. Since this
holds for every $s<2\sep_k^*(\Gb)$, we must have
$\alpha_k=\sep_k^*(\Gb)$.
\end{proof}

\subsection{Local isomorphism and hyperfiniteness}\label{SEC:APPDUAL}

We have two graphings $\Gb_t=(I_t,\AA_t,\lambda_t,E_t)$ ($t=1,2$) and a local
isomorphism $\phi:~\Gb_1\to\Gb_2$. We have seen that
$\sep_k^*(\Gb_2)\le\sep_k^*(\Gb_1)$. We want to conclude the proof of Theorem
\ref{THM:SEP-STAR} by proving that equality holds here.

To this end, let $0<s<\sep_k^*(\Gb_1)$, and recall the formula for
$\sep_k^*(\Gb_1)$ from Theorem \ref{THM:DUAL}: there is a bounded
Borel function $g_1:~I_1\to \R_+$ such that $\overline{g}_1\le\iso$
on $\RR_k$, and
\[
\frac12 \int\limits_{I_1} g_1\,d\lambda_1 > s.
\]
To ``push'' $g_1$ to $\Gb_2$, consider the measure $\lambda_{g_1}$
defined by
\[
\lambda_{g_1}(X)=\int\limits_X g_1\,d\lambda_1,
\]
and its ``push-forward'' defined by
$\gamma(X)=\lambda_{g_1}(\phi^{-1}(X))$. Since $g_1$ is bounded, we
have $\lambda_{g_1}\le K\lambda$ for some $K>0$, and hence
$\gamma(X)\le \lambda_1(\phi^{-1}(X)) = \lambda_2(X)$. It follows
that the Radon--Nikodym derivative
\[
g=\frac{d\gamma}{d\lambda_2}
\]
exists and it is bounded by $K$.

We claim that $g$ satisfies the conditions for $\Gb_2$ in Theorem
\ref{THM:DUAL}. The only nontrivial part is to show that
$\overline{g}\le\iso$.

Let us apply definitions \eqref{EQ:PHIDEF} to each $\Gb_t$, to get
functions $\Phi^t_T$ and measures $\mu^t$ and $\mu_T^t$. It is not
hard to verify (using that $\phi$ is a local isomorphism) that
\begin{equation}\label{EQ:PHI-INVAR}
\Phi^1_{\phi^{-1}(T)}(x)=\Phi^2_T(\phi(x))
\end{equation}
and for all $Y\in\RR_k(\Gb_1)$,
\[
\iso(\phi(Y))=\iso(Y).
\]

Let $T_2\subseteq\RR_k(\Gb_2)$ be any Borel set and let
$T_1=\phi^{-1}(T_2)$. By \eqref{EQ:MU-G} and \eqref{EQ:PHI-INVAR}, we
have
\begin{align*}
\int\limits_{T_2} \overline{g}\,d\mu_2 &= \int\limits_{I_2} \Phi^2_{T_2}(x)\,d\gamma(x) =
\int\limits_{I_1} \Phi^1_{T_1}(x)\,d\lambda_{g_1}(x)
=\int\limits_{T_1} \overline{g_1}\,d\mu_1\\
&\le \int\limits_{T_1} \iso\,d\mu_1= \int\limits_{T_2} \iso\,d\mu_2
\end{align*}
Since this holds for every $T_2$, it follows that
$\overline{g}\le\iso$ almost everywhere on $\RR_k(\Gb_2)$. Changing
the value of $g$ to $0$ on all point of all sets $Y\in\RR_k(\Gb_2)$
where $\overline{g}(Y)>\iso(Y)$, we get a function satisfying the
conditions of Theorem \ref{THM:DUAL}, and hence
\[
\sep_k^*(\Gb_2) \ge \int\limits_{I_2} g\,d\lambda_2 = \int\limits_{I_2} 1\,d\gamma
= \int\limits_{I_1} g_1d\lambda_1 > s.
\]
Since this holds for every $s<\sep_k^*(\Gb_1)$, it follows that
$\sep_k^*(\Gb_2)\ge \sep_k^*(\Gb_1)$.

\subsection{Splitting into finite parts}\label{APPDUAL2}

It is an alternative definition of hyperfinite graphs that one can delete a set
of edges with arbitrarily small measure so that the remaining graph has finite
components only. If the graph is not hyperfinite, we may be interested in
determining how small the measure of such an edge set can be. This question
motivates the following discussion.

Clearly $\sep_k$ is monotone decreasing in $k$, and hence $\lim_{k\to\infty}
\sep_k(\Gb)=\sep(\Gb)$ exists. It is not hard to see that $\sep(\Gb)$ is the
infimum of edge-measures of edge sets, whose deletion from $\Gb$ results in a
graph with every component finite.

We can define similarly $\lim_{k\to\infty}
\sep_k^*(\Gb)=\sep^*(\Gb)$, which is related to $\sep(\Gb)$ by
similar inequalities as in Theorem \ref{THM:SEP-INF}:
\begin{equation}\label{EQ:SEPG}
\sep^*(\Gb) \le \sep(\Gb)\le\sep^*(\Gb)\log\frac{8D}{\sep^*(\Gb)}.
\end{equation}
The graphing $\Gb$ is hyperfinite if and only if $\sep(\Gb)=0$ or,
equivalently, $\sep^*(\Gb)=0$. Duality gives the following nice
formula for $\sep^*(\Gb)$ of any graphing.

\begin{theorem}\label{THM:SEPG}
For every graphing,
\[
\sep^*(\Gb)=\sup_g \frac12\int_I g\,d\lambda,
\]
where $g$ ranges over all bounded Borel functions $g:~V(G)\to\R_+$
such that
\[
\sum_{y\in Y} g(y) \le |\partial Y|
\]
for every finite set $Y\subseteq V(\Gb)$ inducing a connected
subgraph.
\end{theorem}

\begin{proof}
The proof is similar to that in section \ref{SEC:APPDUAL}, but the
details are different. By Theorem \ref{THM:DUAL}, there exist bounded
Borel functions $g_k\ge 0$ $(k=1,2,\dots)$ such that
\begin{equation}\label{EQ:G-CONST}
\sum_{y\in Y} g_k(y) \le |\partial Y|\qquad (\forall Y\in\RR_k),
\end{equation}
and
\begin{equation}\label{EQ:G-CONST2}
\int\limits_I g_k\,d\lambda\ge \sep_k^*(\Gb)-\frac1k.
\end{equation}
Note that $g_k(x) = \overline{g}_k(\{x\}) \le |\partial\{x\}| \le D$,
so these functions remain uniformly bounded. It follows by Alaoglu's
Theorem that they have a weak$^*$ limit $g$ in $L_\infty(J,\lambda)$;
this limit satisfies
\begin{equation}\label{EQ:G-WEAK}
\int\limits_I g_k\,f\,d\lambda \to \int\limits_I g\,f\,d\lambda
\quad(k\to\infty)
\end{equation}
for every $f\in L_1(I,\lambda)$. In particular, it follows that $0\le
g\le D$ (almost everywhere, but we may change $g$ on a zero set, so
that this holds everywhere), and
\[
\|g\|_1 = \int\limits_I g\,d\lambda \ge \sep^*(\Gb).
\]
We claim that $\overline{g}\le\iso$ holds almost everywhere on
$\RR_1\cup\RR_2\cup\dots$. It suffices to prove this on $\RR_k$ for a
fixed $k$.

Let $T\subseteq\RR_k$ be any Borel set. The Radon-Nikodym derivative
$\Phi_T=d\mu_T'/d\lambda$ exists and is bounded by
\eqref{EQ:MU-PRIME}. By \eqref{EQ:MU-G},
\[
\int\limits_{T} \overline{g}\,d\mu
=\int\limits_I g\,d\mu_T' =\int\limits_I g\,\Phi_T\,d\lambda.
\]
Similarly,
\[
\int\limits_{T} \overline{g}_k\,d\mu =\int\limits_I g_k\,\Phi_T\,d\lambda.
\]
Since $\overline{g}_k\le\iso$, we have
\[
\int\limits_I g_k\,f\,d\lambda \le \int\limits_{T} \iso\,d\mu,
\]
and hence by weak convergence,
\[
\int\limits_{T} \overline{g}\,d\mu \le \int\limits_{T} \iso\,d\mu.
\]
This holds for every $T$, which implies that $\overline{g}\le\iso$
almost everywhere. Changing $g$ to $0$ on all points of $I$ in sets
violating $\overline{g}\le\iso$, we get a function as in the theorem.
\end{proof}

\begin{corollary}\label{COR:HYPFIN}
A graphing $\Gb$ is not hyperfinite if and only if there exists a
bounded Borel function $g:~V(G)\to\R_+$, not almost everywhere zero,
such that
\[
\sum_{y\in Y} g(y) \le |\partial Y|
\]
for every finite set $Y\subseteq V(\Gb)$ inducing a connected
subgraph.
\end{corollary}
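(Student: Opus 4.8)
The plan is to read off this corollary directly from Theorem~\ref{THM:HYPFIN} together with the characterization, already recorded above, that a graphing $\Gb$ is hyperfinite exactly when $\sep^*(\Gb)=0$. Recall that this characterization is itself immediate from \eqref{EQ:SEPG}: since $x\log(8D/x)\to 0$ as $x\to 0^+$, we have $\sep(\Gb)=0$ iff $\sep^*(\Gb)=0$, while on the other hand $\Gb$ is hyperfinite iff $\sep_k(\Gb)\to 0$, i.e.\ iff $\sep(\Gb)=\lim_k\sep_k(\Gb)=0$. Thus ``not hyperfinite'' translates to ``$\sep^*(\Gb)>0$'', and the entire content of the corollary is to restate this strict inequality as the existence of a witness function.

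First I would observe that the identically zero function is always admissible in the supremum of Theorem~\ref{THM:HYPFIN}, since $\sum_{y\in Y}0=0\le|\partial Y|$ for every finite connected $Y$; hence that supremum is always $\ge 0$, and it is strictly positive if and only if some admissible bounded Borel function $g\ge 0$ satisfies $\int_I g\,d\lambda>0$. The only analytic remark needed is the elementary fact that, on the probability space $(I,\AA,\lambda)$, a nonnegative bounded Borel function has strictly positive integral precisely when it fails to vanish almost everywhere; this is what lets me trade ``$\int_I g\,d\lambda>0$'' for ``$g$ is not almost everywhere zero''.

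With these observations the two implications are one line each. If $\Gb$ is not hyperfinite, then $\sep^*(\Gb)>0$, so by Theorem~\ref{THM:HYPFIN} the supremum on its right-hand side is positive; by the definition of supremum there is an admissible $g$ with $\tfrac12\int_I g\,d\lambda>0$, and this $g$ is bounded, Borel, nonnegative, satisfies $\sum_{y\in Y}g(y)\le|\partial Y|$ for every finite connected $Y$, and is not almost everywhere zero, as required. Conversely, if such a $g$ is given, it is admissible in Theorem~\ref{THM:HYPFIN} and $\int_I g\,d\lambda>0$, so $\sep^*(\Gb)\ge\tfrac12\int_I g\,d\lambda>0$, and therefore $\Gb$ is not hyperfinite.

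I do not anticipate a genuine obstacle: all of the substance — the duality and the weak-$*$ compactness argument producing $g$ — is already packaged inside Theorem~\ref{THM:HYPFIN}, and the corollary is a pure rephrasing of the condition $\sep^*(\Gb)>0$. The only steps that deserve an explicit sentence are the passage from ``the supremum is positive'' to ``there is a single admissible function attaining a positive value'' (handled by the definition of supremum) and the equivalence between positivity of $\int_I g\,d\lambda$ and the function $g$ not being almost everywhere zero.
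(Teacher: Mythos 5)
Your argument is correct and is exactly the intended deduction: the paper itself records (just before Theorem~\ref{THM:HYPFIN}, via \eqref{EQ:SEPG}) that $\Gb$ is hyperfinite iff $\sep^*(\Gb)=0$, and the corollary then follows by reading the duality formula of Theorem~\ref{THM:HYPFIN} as ``$\sep^*(\Gb)>0$ iff some admissible $g$ has positive integral, i.e.\ is not almost everywhere zero.'' No gap; this matches the paper's (implicit) proof.
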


\section{Concluding questions and remarks}

{\bf 1.} Is the logarithmic factor needed in Theorems \ref{THM:SEP}
and \ref{THM:SEP-INF}, or in inequality \eqref{EQ:SEPG}? Could a
constant factor be enough?

For Theorem \ref{THM:HYPERGR}, in the general setting of hypergraphs,
it is easy show that a logarithmic factor is needed. For example, let
$\HH$ be the hypergraph whose vertices are all $p$-element subsets of
an $2p$-element set $S$, and edges are subfamilies containing a given
element of $S$. Let all hyperedges have weight $1$. Then any $p+1$
hyperedges cover $V(\HH)$, but no $p$ of them does, and so
\[
\sigma(\HH,w)= \frac{p+1}{\binom{2p}{p}}.
\]
On the other hand, $x_A = 1/p$ ($A\in E(\HH)$) defines a fractional
cover, which is easily seen to be optimal, and hence
\[
\sigma^*(\HH,w)= \frac{2}{\binom{2p}{p}}.
\]
So we see that
\[
\sigma(\HH,w)\sim \frac14 \sigma^*(\HH,w)\log
\frac1{\sigma^*(\HH,w)}
\]

\medskip

\noindent{\bf 2.} Is $\sep_k^{*}(G)=\sep_k^{**}(G)$ for every graph $G$? In
other words, can the simple manipulation described after the statement of
Theorem \ref{THM:SEP} be extended from covers to fractional covers? If not, how
far apart can these two parameters be?

\medskip

\noindent{\bf 3.} Can we improve the bound in Theorem \ref{THM:SEP}
by allowing larger components? Perhaps it is true that
\[
\lim_{m\to\infty} \sep_m(\Gb)\le \sep^*_k(\Gb)
\]
for every graphing $\Gb$ and integer $k\ge 1$. This would imply that
$\sep^{*}(\Gb)=\sep(\Gb)$ for every graphing $\Gb$.

\medskip

\noindent{\bf 4.} A central goal in the theory of very large graphs (and in the
theory of their limits) is to encode them, approximately, in a compact way. In
the case of dense graphs and their limit graphons, the Regularity Lemma of
Szemer\'edi \cite{Szem1,Szem2} provides such an encoding: Given a graph $G$ and
a positive integer $k$, we can construct its regularity partition
$\{V_1,\dots,V_k\}$ into $k$ (almost equal) classes, and record the edge
densities $a_{ij}$ between any two classes $V_i$ and $V_j$. This $k\times k$
matrix $(a_{ij})_{i,j=1}^k$ contains enough information to determine the
approximate value of the subgraph densities of $G$, and much more. (The
dependence between $k$ and the error we commit depends on which version of the
Regularity Lemma we talk about: Weak (Frieze and Kannan \cite{FK}), Original
(Szemer\'edi \cite{Szem2}), or Strong (Alon, Fischer, Krivelevich and Szegedy
\cite{AFKS}.)

Furthermore, we can reconstruct an imitation of the original graph $G$ by
taking $k$ sets $U_1,\dots,U_k$ of the same size, and putting in a random
bipartite graphs of the given density $a_{ij}$ between any two $U_i$, $U_j$ of
these sets. We don't have to know the size of the original graph for this
construction. This allows us to ``scale up'' or ``scale down'' the graph with
little change in its basic properties (subgraph densities, spectra, maximum
cut, etc.).

The applications of this Lemma in extremal graph theory, computer science,
graph limit theory and elsewhere are so numerous that it would be impossible
even to give a glimpse of them.

In the case of bounded degree graphs, or graphings, no similarly far-reaching
``Regularity Lemma'' has been found so far; extensions from dense graphs to
sparser graphs (see e.g.~\cite{Sco}) are nontrivial, but unfortunately they
become weaker and weaker as the edge-density decreases. One class for which a
compact encoding can be given, serving similar goals, are hyperfinite graphs.

Let $G$ be a graph such that $\sep_k(G)\le\eps$. Then we can delete an $\eps$
fraction of the edges so that every connected component of the remaining graph
has at most $k$ nodes. Specifying the fraction $a_H$ of connected components
isomorphic to a given graph $H$ creates a bonded number of data, which contain
enough information about the graph to determine, approximately, the
distribution of $r$-neighborhoods (for values of $r$ bounded by some function
of $\eps$), and again, much more. We can construct an imitation of the original
graph $G$ by taking the disjoint union of connected graphs with at most $k$
nodes, where the number of copies each of these graphs $H$ is proportional to
$a_H$.

While hyperfiniteness seems restrictive, many of the most important graph
classes (for example, planar graphs with bounded degree) are hyperfinite. It
would be interesting to test to what degree are real-life graphs hyperfinite
(for a moderately small $\eps$ and a $k$ that is small for the given graph
size). Perhaps the algorithms described in this paper will be useful for this.
The work of Kaimanovich \cite{Kaim} shows that if a graph is not sufficiently
hyperfinite, then it contains an expander in some sense. Since expander graphs
are notoriously difficult to construct, this indicates that hyperfinite graphs
may be more common than one would think.

It would be interesting to extend the notion of hyperfiniteness in the
following direction. We would like to delete a set of $T$ edges to get a graph
with all connected components having at most $k$ nodes; but now $T$ does not
have to small, just have small complexity in some sense. As a toy example, we
can construct a graph $G'$ by adding a random perfect matching to a graph $G$
with components having at most $k$ nodes. We can determine the neighborhood
statistics of $G'$ from the frequencies $a_H$ in $G$. Can we reconstruct $G$
from $G'$?

\end{document}